\newtheorem{theorem}{Theorem}[section]
\newtheorem{conjecture}[theorem]{Conjecture}
\newtheorem{proposition}[theorem]{Proposition}
\newtheorem{problem}{Problem}
\newtheorem{corollary}[theorem]{Corollary}
\newtheorem{lemma}[theorem]{Lemma}
\newtheorem{remark}[theorem]{Remark}
\newtheorem{example}[theorem]{Example}
\newcommand{\Z}{\mbox{${\mathbb Z}$}}
\title{Zero-sum subsequences in bounded-sum $\{-1, 1\}$-sequences}
\begin{document}

\maketitle

\begin{center}

\begin{multicols}{2}

Yair Caro\\[1ex]
{\small Dept. of Mathematics \\
University of Haifa-Oranim\\
Tivon 36006, Israel\\
yacaro@kvgeva.org.il}

\columnbreak

Adriana Hansberg\\[1ex]
{\small Instituto de Matem\'aticas\\
UNAM Juriquilla\\
Quer\'etaro, Mexico\\
ahansberg@im.unam.mx}\\[2ex]

\end{multicols}

Amanda Montejano\\[1ex]
{\small UMDI, Facultad de Ciencias\\
UNAM Juriquilla\\
Quer\'etaro, Mexico\\
amandamontejano@ciencias.unam.mx}\\[4ex]

\end{center}

\begin{abstract}
The following result gives the flavor of this paper:
Let $t$, $k$ and $q$ be integers such that $q\geq 0$, $0\leq t < k$ and $t \equiv k \,({\rm mod}\, 2)$, and let $s\in [0,t+1]$ be the unique integer satisfying $s \equiv q + \frac{k-t-2}{2} \,({\rm mod} \, (t+2))$. Then for any integer $n$ such that
\[n \ge \max\left\{k,\frac{1}{2(t+2)}k^2 + \frac{q-s}{t+2}k - \frac{t}{2} + s\right\}\]
and any function $f:[n]\to \{-1,1\}$ with $|\sum_{i=1}^nf(i)| \le q$, there is a set $B \subseteq [n]$ of $k$ consecutive integers with $|\sum_{y\in B}f(y)| \le t$. Moreover, this bound is sharp for all the parameters involved and a characterization of the extremal sequences is given.

This and other similar results involving different subsequences are presented, including decompositions of sequences into  subsequences of bounded weight.
\end{abstract}

\section{Introduction}

A classical theorem in discrepancy theory (see \cite{Cha} as a general reference) is a celebrated theorem of Roth \cite{Ro}, which states that, for any positive integer $n$ and any function $f :[n] \rightarrow \{ -1 , 1\}$, there exists an arithmetic progression $A \subseteq [n]$ such that $| \sum_{x \in A} f(x)| \ge c n^{\frac{1}{4}}$ for some positive constant $c$. This bound is sharp up to a constant factor as shown later by Matousek and Spencer \cite{MaSp}. 
Another related result is Tao's \cite{Tao} recent proof of the Erd\H{o}s discrepancy conjecture, which states that, for any sequence $f:  \mathbb{N} \rightarrow \{-1,1\}$, the discrepancy $\sup_{n,d \in \mathbb{N}} |\sum_{j = 1}^n f(jd)|$ is infinite.

In this paper, we consider somewhat the opposite direction: instead of looking on the min-max problem which is the main concern of discrepancy theory, we consider the max-min problem. Specifically, in Theorem \ref{thm:k-blocks-t-sum} we prove the following:\\
Let $t$, $k$ and $q$ be integers such that $q \ge 0$, $0 \le t < k$, and $t \equiv k \,({\rm mod}\, 2)$, and let $s \in [0,t+1]$ be the unique integer satisfying $s \equiv q + \frac{k-t-2}{2} \,({\rm mod} \, (t+2))$. Then, for any integer $n$ such that
\[n \ge \max\left\{k,\frac{1}{2(t+2)}k^2 + \frac{q-s}{t+2}k - \frac{t}{2} + s\right\}\]
and any function $f:[n]\to \{-1,1\}$ with $|\sum_{i=1}^nf(i)| \le q$, there is a set $B \subseteq [n]$ of $k$ consecutive integers with $|\sum_{y\in B}f(y)| \le t$. This is best possible and the structure of the extremal sequences is also determined (see Theorem \ref{thm:sharpness_k-blocks-t-sum}). This result can be extended to cover the range $q = o(n)$,  and, in Theorem \ref{thm:o(n)}, we give an infinite version of this  phenomenon which we later  (in Section 5) apply for two specific examples with well-known number theoretic functions, the Liouville's function and  the Legendre symbol of quadratic  versus non quadratic residues.

As sequences of $k$ consecutive integers are, in fact, $k$-term arithmetic progressions with difference $1$, one could be tempted to ask weather Theorem \ref{thm:k-blocks-t-sum} offers the best possible value for arithmetic progressions, too. We show in Proposition \ref{prop:arithm-prog} that this is not always the case. The problem of finding the corresponding minimum positive integer for arithmetic progressions seems to be difficult. However, we leave the interested reader with a conjecture stating that the bound on $n$ should remain quadratic on $k$ (see Conjecture \ref{conj:arithm-prog}). Moreover, we consider  in Section 3 a relaxed version of $k$-term arithmetic progressions of common difference $d$, which we call $(d,k)$-blocks: sequences of $k$ integers $T = \{ a_1, \ldots ,a_k \}$  such that $a_1 < a_2 < \ldots < a_k$ and $a_{j+1} - a_j \le d$, for all $1 \le j \le k-1$.
We prove the following (Theorem \ref{thm:(d,k)blocks_k>4}):
\noindent
Let $k$, $d$ and $n$ be positive integers such that $k \ge 6$ is even, $d \ge 2$, and 
$n \ge \frac{d+1}{8}(k^2-2sk+4s-4) + 1$, where $s \in \{0,1\}$ with $s \equiv \frac{k-2}{2} \;({\rm mod}\; 2)$. Then, for any function $f:[n] \rightarrow \{-1,1\}$ such that $|\sum_{i=1}^nf(i)| \le \frac{(d-1)}{d+1}n$, there is a zero-sum $(d,k)$-block. Again we show that our bound is sharp and characterize the extremal sequences (see Theorem \ref{thm:sharpness_(d,k)blocks}). 

In Section  4, we turn our attention to the problem of decomposing a sequence $S = \{a_1, \ldots ,a_{nm} \}$, where $a_ j \in \{ -r, s\}$  for positive integers $r ,s$,  where $|\sum_{j=1}^{nm} a_ j| \le nq$, into $n$ $(2n-1, m)$-blocks $S_1, \ldots ,S_n$ with best possible upper bound on $\max_{1\le j \le n} |f(S_j)|$. This is done via a graph-theoretical approach. We mention that this problem is related to some results of Sevastyanov \cite{Sev}, Barany \cite{Bar}, and Ambrus-Barany-Grinberg \cite{AmBaGr} concerning rearranging bounded-sum sequences of vectors such that the partial sums are also bounded.  In the context of zero-sum subsequences over $\mathbb{Z}$, our results has some relations to those given for example in  \cite{AMShSiV}, \cite{CY} (Lemma 3.1), and \cite{SST}. 


In the last section of this paper, we mention several open problems and we give also two simple number-theoretical applications when $|\sum_{1 \le j \le n}f(x)| = o(n)$, with a sketch of the proofs, where the full details can be easily completed by the interested reader. The first application deals with the Liouville function in a direction inspired by the work of Hildebrand \cite{Hil} on sign patterns of this function in short intervals. The second application is about zero-sum blocks of consecutive primes subject to the Legendre symbol of quadratic-non quadratic residues, which uses our result combined with the ``equidistribution'' of primes in arithmetic progressions.

\section{Existence of zero-sum blocks}


We shall use the following definitions and notations.
Let $X$ be any set and let $Y$ be a subset of $X$. Given an integer function $f:X\to \Z$, we use $f(Y)$ to denote the sum of the function over all elements in $Y$, that is, $f(Y):=\sum_{y\in Y}f(y)$. Some times we called $f(Y)$ the  \emph{weight} of $Y$ with respect to $f$.  We say that $Y$ is a \emph{zero-sum set} with respect to $f$, if $f(Y)=0$.
 
As usual, $[n]$ denotes  the set  $\{1,2,...,n\}$. A \emph{k-block} is a set of $k$ consecutive integers.  A zero-sum $k$-block will be referred, for short, as a ZS $k$-block.


\begin{lemma}\label{lem:k-blocks-t-sum}
Let $t$, $k$ and $n$ be integers such that $t \equiv k \,({\rm mod}\, 2)$ and $|t| < k \leq n$, and let $f:[n]\to \{-1,1\}$. If there are $k$-blocks $S$ and $T$ in $[n]$ such that $f(S) < t$ and $f(T) >t$, then there is a $k$-block $B$ in $[n]$ with $f(B)= t$.
\end{lemma}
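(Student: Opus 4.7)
The natural approach is a discrete intermediate value theorem applied to the sliding window sum. Define, for each admissible starting index $i \in [1, n-k+1]$, the function
\[
g(i) \;:=\; f(\{i, i+1, \ldots, i+k-1\}) \;=\; \sum_{j=i}^{i+k-1} f(j).
\]
So each $k$-block in $[n]$ corresponds to some value $g(i)$, and the hypothesis gives indices $i_S, i_T$ with $g(i_S) < t < g(i_T)$. We want to produce an $i^{\star}$ with $g(i^{\star}) = t$.

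The plan rests on two observations. First, telescoping gives $g(i+1) - g(i) = f(i+k) - f(i) \in \{-2, 0, 2\}$, so $g$ has steps of absolute value at most $2$, and every step is even. Second, $g(i)$ is a sum of $k$ values in $\{-1, 1\}$, hence $g(i) \equiv k \equiv t \pmod 2$ for every $i$ (this is exactly where the parity assumption $t \equiv k \pmod 2$ is used).

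With these in hand, I would run the discrete intermediate value argument. Assume without loss of generality $i_S < i_T$ (the other case is symmetric). Let
\[
i^{\star} \;:=\; \max\{\, i : i_S \le i \le i_T,\ g(i) < t \,\}.
\]
This set is nonempty (it contains $i_S$) and bounded above by $i_T - 1$ since $g(i_T) > t$. By maximality $g(i^{\star}+1) \ge t$. If $g(i^{\star}+1) = t$ we are done; otherwise $g(i^{\star}+1) > t$. Combined with the parity observation this forces $g(i^{\star}) \le t-2$ and $g(i^{\star}+1) \ge t+2$, so $g(i^{\star}+1) - g(i^{\star}) \ge 4$, contradicting the step-size bound $|g(i+1) - g(i)| \le 2$. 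Hence the value $t$ is attained, and the corresponding $k$-block $B$ has $f(B) = t$.

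I do not anticipate any real obstacle: the content is essentially a parity-respecting discrete IVT, and the only subtlety is ensuring the parity assumption $t \equiv k \pmod 2$ is invoked precisely to rule out "jumping over" $t$ in a single step.
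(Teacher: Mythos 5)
Your proposal is correct and is essentially the paper's own argument: both rest on the observation that consecutive sliding-window sums differ by $f(i+k)-f(i)\in\{-2,0,2\}$ together with the parity constraint $g(i)\equiv k\equiv t\pmod 2$, and both locate the crossing point (your ``last index below $t$'' is the mirror of the paper's ``first block dropping below $t$'') to force an impossible jump of at least $4$. No gaps; the argument is complete as written.
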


\begin{proof}
Denote the $n-k+1$ $k$-blocks in $[n]$ by $B_1,B_2, ... , B_{n-k+1}$, where $B_i:=\{i,...,i+k-1\}$ with $i \in \{1,...,n-k+1\}$. Suppose $f(B_i) \neq t$ for all $1 \le i \le n-k+1$. Assume first that $f(B_1)\geq t+1$. In fact, because of the parity of $k$ and $t$, $f(B_i) \equiv k \equiv t \,({\rm mod}\, 2)$, so we actually have $f(B_1)\geq t+2$. Consider the first $k$-block, say $B_l$, with $f(B_l) \le t-1$. With the same parity argument, we have $f(B_l) \le t-2$ but $f(B_{l-1}) \ge t+2$ and, therefore, $|f(B_{l-1})-f(B_l)|\geq 4$. But $B_{l-1}$ and $B_l$ differ in exactly two elements,  $l-1$ and $l+k-1$. Hence, $|f(B_{l-1})-f(B_l)|=|f(l-1)-f(l+k-1)|\in \{0,2\}$, a contradiction. The case $f(B_1)\leq t-1$ follows analogously.
\end{proof}

\begin{theorem}\label{thm:k-blocks-t-sum}
Let $t$, $k$ and $q$ be integers such that $q \ge 0$, $0 \le t < k$, and $t \equiv k \,({\rm mod}\, 2)$, and take $s \in [0,t+1]$ as the unique integer satisfying $s \equiv q + \frac{k-t-2}{2} \,({\rm mod} \, (t+2))$. Then, for any integer $n$ such that
\[n \ge \max\left\{k,\frac{1}{2(t+2)}k^2 + \frac{q-s}{t+2}k - \frac{t}{2} + s\right\}\]
and any function $f:[n]\to \{-1,1\}$ with $|f([n])| \le q$, there is a $k$-block $B \subseteq [n]$ with $|f(B)| \le t$.
\end{theorem}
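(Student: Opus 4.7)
The plan is to argue by contradiction. Suppose every $k$-block $B \subseteq [n]$ satisfies $|f(B)| > t$; the parity hypothesis $t \equiv k \pmod{2}$ then sharpens this to $|f(B)| \geq t+2$. Applying Lemma \ref{lem:k-blocks-t-sum} with the value $t$ rules out the coexistence of blocks $S, T$ with $f(S) \leq -(t+2) < t$ and $f(T) \geq t+2 > t$, since such a pair would produce a block whose $f$-sum equals $t$, contradicting $|f(B)| > t$. After possibly replacing $f$ by $-f$, I may therefore assume $f(B) \geq t+2$ for every $k$-block $B$.

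I next translate the problem to partial sums. Let $S_j := f([j])$ with $S_0 = 0$; the hypothesis becomes $S_{j+k} - S_j \geq t+2$ for all $0 \leq j \leq n-k$, together with $|S_n| \leq q$. Write $n = mk + r$ with $0 \leq r \leq k-1$, and set $b := (k-t-2)/2 \geq 0$. Iterating the step inequality $m$ times yields $S_n \geq S_r + m(t+2)$. Moreover, since $S_k \geq t+2$, among $f(1), \dots, f(k)$ there are at most $b$ copies of $-1$, so among $f(1), \dots, f(r)$ there are at most $\min(r,b)$ copies of $-1$; consequently $S_r \geq r - 2\min(r,b)$. Combined with $S_n \leq q$, this produces the key inequality
\[
m(t+2) \;\leq\; q + \min(r,\, 2b - r).
\]

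Set $M := (q + b - s)/(t+2)$, a non-negative integer by the definition of $s \in [0,t+1]$ and the congruence $s \equiv q + b \pmod{t+2}$. A short case analysis, splitting according to whether $r \leq b$ or $r > b$ (and within each case according to whether the integer part of the right-hand side of the key inequality equals $M$ or drops to $M-1$), shows $n = mk + r \leq Mk + b + s$ in every situation: one either has $m \leq M$ together with $r \leq b + s$, or $m \leq M - 1$ together with $r \leq k-1$. A direct computation using $b = (k-t-2)/2$ identifies $Mk + b + s + 1$ with the stated threshold $\frac{1}{2(t+2)}k^2 + \frac{q-s}{t+2}k - \frac{t}{2} + s$, yielding the desired contradiction. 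The main obstacle is this last arithmetic step: the naive approach of decomposing $[n]$ into $\lfloor n/k \rfloor$ disjoint $k$-blocks only delivers a bound of order $k^2/(t+2)$, and the factor-of-two improvement to the sharp $k^2/(2(t+2))$ hinges on the refined estimate $S_r \geq r - 2b$ for $r > b$ (which exploits $f(B_1) \geq t+2$) together with matching the resulting inequality to the residue of $q + b$ modulo $t+2$ captured by $s$.
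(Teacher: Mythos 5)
Your argument is correct and follows essentially the same route as the paper's: the same reduction via Lemma \ref{lem:k-blocks-t-sum} to the case $f(B)\ge t+2$ for every $k$-block, the same division $n=mk+r$ into $m$ disjoint $k$-blocks plus a remainder, and the same use of one extra overlapping $k$-block (you use the first block to bound the remainder's weight, the paper uses the last) to obtain the factor-of-two saving. The remaining differences are presentational: you work with partial sums and split cases on $r$ versus $b$ and $b+s$, whereas the paper splits on whether $m=M$ or $m\ge M+1$, but both reduce to the same identity $Mk+\tfrac{k-t}{2}+s=\tfrac{1}{2(t+2)}k^2+\tfrac{q-s}{t+2}k-\tfrac{t}{2}+s$.
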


\begin{proof}
Take $t$, $k$, $q$, $s$ and $n$ as in the statement of the theorem, and consider a function $f:[n]\to \{-1,1\}$ with $|f([n])| \le q$. We will prove the existence of a $k$-block $B \subseteq [n]$ with $|f(B)| \le t$.

If there are $k$-blocks $B$ and $D$ in $[n]$ such that $f(B)<t$ and $f(D) > t$, then we are done by Lemma \ref{lem:k-blocks-t-sum}. So either $f(B) \le t$ or $f(B) \ge t$ for all $k$-blocks $B$ in $[n]$. Analogously, if there are $k$-blocks $B$ and $D$ in $[n]$ such that $f(B)< -t$ and $f(D) > -t$, then we are done by Lemma \ref{lem:k-blocks-t-sum}. So either $f(B) \le -t$ or $f(B) \ge -t$ for all $k$-blocks $B$ in $[n]$. Altogether we obtain that either $f(B) \le -t$ for all $k$-blocks $B$, or $f(B) \ge t$ for all $k$-blocks $B$, or $-t \le f(B) \le t$ for all $k$-blocks $B$. In the third case there is nothing to prove. Moreover, if there is a $k$-block $B$ with $|f(B)| = t$, then we are done, too. So we may assume, without loss of generality, that $f(B) > t$ for all $k$-blocks $B$ (otherwise multiply everything by $-1$).  Since $t \equiv k \,({\rm mod}\, 2)$, note that we actually have $f(B) \ge t+2$ for all $k$-blocks $B$.

Now notice that
\begin{align*}
 n \ge \frac{1}{2(t+2)}k^2 + \frac{q-s}{t+2}k - \frac{t}{2} + s
 &= \frac{1}{2(t+2)}k^2 + \frac{q-s}{t+2}k + \frac{-t-2+t+2}{2(t+2)}k  - \frac{t}{2} + s\\
&=\frac{1}{2(t+2)}k^2 + \left(\frac{2q-2s-t-2}{2(t+2)}\right) k + \frac{1}{2}k - \frac{t}{2} + s\\
&=\left(\frac{k+2q-2s-t-2}{2(t+2)}\right) k + \frac{k - t}{2} + s,
\end{align*}
where, by hypothesis, $\frac{k+2q-2s-t-2}{2(t+2)}= \frac{q+ \frac{k-t-2}{2}-s}{t+2} $ is a non-negative integer, and $\frac{k-t}{2} + s$ is a positive integer. 

Assuming that $n=mk+r$, where $m$ and $r$ are integers such that $m\geq 0$ and $0 \leq r \leq k-1$, we have, by the above computation,
\begin{align}\label{eq:n=mk+r}
n = mk+r &\ge \left(\frac{k+2q-2s-t-2}{2(t+2)}\right) k + \frac{k-t}{2} + s,
\end{align}
and so $m \ge \frac{k+2q-2s-t-2}{2(t+2)}$.  Moreover, since  $n\geq k$, we actually have $m \ge \max\{1,\frac{k+2q-2s-t-2}{2(t+2)}\}$.
Now we split $[n]$ into $m$ disjoint consecutive $k$-blocks, $B_1, B_2, ..., B_m$, and a remainder block $R$ with $|R|=r$, so that $R=\emptyset$ if $r=0$ and $R=\{n-r+1,...,n\}$ otherwise. Since $f(B_i)\geq t+2$ for every $1\leq i\leq m$, we obtain
\begin{equation}\label{eq:first}
(t+2)m+f(R)\leq \sum_{i=1}^m f(B_i)+f(R)= f([n]) \le q,\end{equation} 
which yields
\begin{equation}\label{eq:wR}
f(R) \le q-(t+2)m.
\end{equation} 

Denote by $B^*$ the last $k$-block $\{n-k+1,...,n\}$, and let $T:=B^*\cap B_m$. Thus, $B^*=T\cup R$. By assumption, $f(B^*) = f(T)+f(R)\geq t+2$, which yields, together with (\ref{eq:wR}),
\begin{equation}\label{eq:wT}
f(T) \ge (t+2)(m+1) -q.
\end{equation} 

Now we distinguish two cases. Observe that, when $n = mk$, we have necessarily $m > \left(\frac{k+2q-2s-t-2}{2(t+2)}\right)$ because $\frac{k-t}{2} + s$ cannot be $0$. \\

\noindent
{\it Case 1: Suppose that $m =  \frac{k+2q-2s-t-2}{2(t+2)}$.} Recall that $m\neq 0$. Then, by (\ref{eq:n=mk+r}), it follows that $ 0  <  \frac{k-t}{2} + s \le r \le k-1$. On the other hand, inequality (\ref{eq:wT}) gives
\[f(T) \ge (t+2)(m+1)-q = \frac{k+t+2}{2} -s.\]
Using the fact that $s \le t+1$ and that $k > t$, we easily see that the right hand side of above inequality is positive. Hence, we obtain
\[|T| = \sum_{i \in T} |f(i)| \ge |f(T)| \ge \frac{k+t+2}{2} -s.\]
This in turn leads to $r  = |R| = |B^*| - |T| = k - |T| \le \frac{k-t}{2} + s - 1$, a contradiction to the lower bound for $r$ in this case.  

\noindent
{\it Case 2: Suppose that $m \ge \frac{k+2q-2s-t-2}{2(t+2)} + 1 = \frac{k+2q-2s+t+2}{2(t+2)}.$
}
Then, by (\ref{eq:wT}) and (\ref{eq:wR}), 
\[f(T) \ge (t+2)(m+1) -q  \ge \frac{k+2q-2s+3t+6}{2}-q = \frac{k+ 3t + 6}{2}-s, \mbox{ and}\]
\[f(R) \le q - (t+2) m \le q- \frac{k+2q-2s+t+2}{2} = s - \frac{t+k+2}{2}.\]
With $s \le t+1$, we easily note that the right hand side of the first inequality is positive, whereas, together with $t < k$, follows that the right hand side of the second one is negative. If $r=0$, we have already a contradiction. So it remains to handle the case $r \neq 0$. Then the above inequalities imply that 
\[|T|  = \sum_{i \in T} |f(i)| \ge |f(T)| \ge \frac{k+ 3t + 6}{2}-s, \mbox{ and}\]
\[|R|   = \sum_{i \in R} |f(i)| \ge |f(R)| \ge \frac{k+t+2}{2} - s,
\]
which, together, yield the contradiction $|B^*| = |T|+|R| \ge k +2t +4 -2s \ge k+2 > k$.\\
Since in both cases we obtain a contradiction, our assumption that all $k$ blocks $B$ have $f(B) > t$ is not possible and, thus, the proof is complete.
\end{proof}

Clearly, in a sequence of alternating $-1$'s and $1$'s of any length $n$ and for any $t \ge 2$ 
, there is no
$k$-block $B$ with $f(B) = t$. However, the situation is different when $t \in \{ 0, 1\}$, as demonstrated by the two corollaries below.

\begin{corollary}\label{cor:zs}
Let $k \ge 2$ and $q \ge 0$ be integers with $k$ even, and take $s \in \{0,1\}$ as the unique integer satisfying $s \equiv q + \frac{k-2}{2} \,({\rm mod} \, 2)$. Then,  for any integer $n$ such that
\[n \ge \max\left\{k, \frac{k^2}{4} + \frac{q-s}{2}k+s\right\}\]
and any function $f:[n]\to \{-1,1\}$ with $|f([n])| \le q$, there is a ZS $k$-block in $[n]$.
\end{corollary}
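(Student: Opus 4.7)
The plan is to observe that Corollary 2.3 is exactly the $t=0$ specialization of Theorem 2.1 (the notation \texttt{thm:k-blocks-t-sum}). So the entire proof consists of checking that the hypotheses match up and that the conclusion specializes correctly.

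First I would verify the parameter conditions. With $t=0$, the congruence $t\equiv k \pmod 2$ in Theorem 2.1 becomes the assumption that $k$ is even, which is given. The condition $0\le t < k$ reduces to $k\ge 1$, which is implied by $k\ge 2$. The set $[0,t+1]$ becomes $\{0,1\}$, and the congruence defining $s$ becomes
\[s \equiv q + \tfrac{k-2}{2} \pmod 2,\]
which is precisely the condition in the corollary.

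Next I would check that the numerical bound on $n$ matches. Substituting $t=0$ into
\[\frac{1}{2(t+2)}k^2 + \frac{q-s}{t+2}k - \frac{t}{2} + s\]
gives
\[\frac{k^2}{4} + \frac{q-s}{2}k + s,\]
which is the bound appearing in the corollary. Finally, the conclusion of Theorem 2.1 is the existence of a $k$-block $B\subseteq[n]$ with $|f(B)|\le t$, which with $t=0$ forces $f(B)=0$, i.e.\ $B$ is a ZS $k$-block.

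There is no obstacle here; the corollary is obtained by direct substitution. One might, for completeness, remark why the parity requirement on $k$ cannot be dropped (if $k$ is odd then $f(B)$ is always odd for $\{-1,1\}$-valued $f$, so $f(B)=0$ is impossible), explaining why the corollary is stated only for even $k$.
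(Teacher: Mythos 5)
Your proposal is correct and coincides exactly with the paper's proof, which also obtains the corollary by setting $t=0$ in Theorem \ref{thm:k-blocks-t-sum}; you have merely spelled out the routine substitution checks that the paper leaves implicit. Nothing further is needed.
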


\begin{proof}
Setting $t=0$ in Theorem \ref{thm:k-blocks-t-sum}, the statement follows easily.
\end{proof}

A $k$-block $B$ such that $|f(B)|=1$ is called  a \emph{$1$-sum $k$-block}.
 
\begin{corollary}\label{cor:1s}
Let $k \ge 3$ and $q \ge 0$ be integers with $k$ odd, and take $s \in \{0,1, 2\}$ as the unique integer satisfying $s \equiv q + \frac{k-3}{2} \,({\rm mod} \, 3)$. Then, for any integer $n$ such that
\[n \ge \max\left\{k,\frac{k^2}{6} + \frac{q-s}{3} k + s - \frac{1}{2}\right\}\]
and any function $f:[n]\to \{-1,1\}$ with $|f([n])| \le q$, there is a $1$-sum $k$-block in $[n]$.
\end{corollary}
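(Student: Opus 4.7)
The plan is to derive this as a direct specialization of Theorem \ref{thm:k-blocks-t-sum} at $t = 1$. First I would check that the hypotheses match: since $k \ge 3$ is odd, we have $t = 1 \equiv k \,({\rm mod}\, 2)$ and $0 \le t = 1 < k$, so $t=1$ is a legal choice. The congruence defining $s$ in Theorem \ref{thm:k-blocks-t-sum}, namely $s \equiv q + \frac{k - t - 2}{2} \,({\rm mod}\, (t+2))$, becomes $s \equiv q + \frac{k - 3}{2} \,({\rm mod}\, 3)$, which coincides exactly with the congruence in the corollary, and the range $s \in [0,t+1] = [0,2]$ matches.

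Next I would substitute $t = 1$ into the lower bound on $n$ in Theorem \ref{thm:k-blocks-t-sum}:
\[
\frac{1}{2(t+2)} k^2 + \frac{q-s}{t+2} k - \frac{t}{2} + s \;=\; \frac{k^2}{6} + \frac{q-s}{3} k - \frac{1}{2} + s,
\]
which is exactly the threshold appearing in the corollary. So the hypothesis $|f([n])| \le q$ together with $n \ge \max\{k,\, \frac{k^2}{6} + \frac{q-s}{3}k + s - \frac{1}{2}\}$ lets us apply Theorem \ref{thm:k-blocks-t-sum} and produce a $k$-block $B \subseteq [n]$ with $|f(B)| \le 1$.

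Finally I would promote ``$|f(B)| \le 1$'' to ``$|f(B)| = 1$'' via a parity observation: since $f$ takes values in $\{-1,1\}$ and $|B| = k$ is odd, $f(B) \equiv k \equiv 1 \,({\rm mod}\, 2)$, so $f(B)$ cannot be $0$. Hence $|f(B)| = 1$, meaning $B$ is a $1$-sum $k$-block in the sense defined just before the corollary. There is essentially no obstacle here beyond bookkeeping; the only thing that must not be overlooked is this parity remark, which is what forces the ``$\le 1$'' output of the theorem to land exactly at $1$ in the odd-$k$ case.
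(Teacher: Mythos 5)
Your proposal is correct and matches the paper's own proof, which likewise sets $t=1$ in Theorem \ref{thm:k-blocks-t-sum} and invokes the same parity observation (odd $k$ forces $f(B)$ odd, hence $|f(B)|\neq 0$) to conclude $|f(B)|=1$. You have simply spelled out the substitution and bookkeeping that the paper leaves implicit.
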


\begin{proof}
Setting $t=1$, and noting that, by parity reasons, $|f(B)| \neq 0$ for any $k$-block $B$ of $[n]$, the statement follows easily by Theorem \ref{thm:k-blocks-t-sum}.
\end{proof}

Clearly, if $\max\left\{k,\frac{1}{2(t+2)}k^2 + \frac{q-s}{t+2}k - \frac{t}{2} + s\right\}=k$ Theorem \ref{thm:k-blocks-t-sum} is tight, since with one unit below we do not even have $k$-blocks. Theorem \ref{thm:sharpness_k-blocks-t-sum}  below shows that Theorem \ref{thm:k-blocks-t-sum} is tight  also when $\frac{1}{2(t+2)}k^2 + \frac{q-s}{t+2}k - \frac{t}{2} + s>k$, meaning that, for $n=\frac{1}{2(t+2)}k^2 + \frac{q-s}{t+2}k - \frac{t}{2} + s-1$, there are examples of functions having the highest possible value for $f([n])$, namely $q$, such that no $k$-block $B \subseteq [n]$ has weight $|f(B)| \le t$. With this aim we define, for  suitable integers $k$, $t$ and $q$, the family of functions $\mathcal{F}_{k,t,q}$. Let $k$, $t$ and $q$ be integers satisfying that $q \ge 0$, $0 \le t < k$, $t \equiv k \,({\rm mod}\, 2)$,  and $\frac{1}{2(t+2)}k^2 + \frac{q-s}{t+2}k - \frac{t}{2} + s> k$, where $s \in [0,t+1]$ with $s \equiv q + \frac{k-t-2}{2} \,({\rm mod} \, (t+2))$. Let $n=\frac{1}{2(t+2)}k^2 + \frac{q-s}{t+2}k - \frac{t}{2} + s-1$, and write $n=km+r$ where $m$ and $r$ are integers such that $m \ge 0$ and $0 \leq r \le k-1$. Note that, by assumption, we actually have  $m \ge 1$. Then we define a function $f$ to belong to  $\mathcal{F}_{k,t,q}$ if and only if $f:[n] \rightarrow \{-1,1\}$ is such that
\[\pm f(x) = \left\{
\begin{array}{rl}
-1, & x \in R \setminus S,\\
1, & \mbox{otherwise},
\end{array}
\right.
\] 
for subsets $R$ and $S$ of $[n]$ with $R=S=\emptyset$ if $r=0$ and $R = \bigcup_{i=1}^{m+1} R_i$, $S = \bigcup_{i=1}^{m+1} S_i$ otherwise, where
\[R_i= \{(i-1)k+1,(i-1)k+2, \ldots, (i-1)k+r\},\] 
\[S=\emptyset, \mbox{ if } s=0, \mbox{ and,}\]
\[\mbox{if } s \neq 0, \; S_i = \{a_1^i, a_2^i, \ldots, a_s^i\} \subseteq R_i \] 
is such that $a_1^i < a_2^i < \ldots < a_s^i$ and $a^{i+1}_{\ell} \le a^{i}_{\ell} + k$, for $i \in [m]$ and $\ell \in [s]$.\\


The family of functions $\mathcal{F}_{k,t,q}$ will prove to be precisely   the family of functions for which Theorem \ref{thm:k-blocks-t-sum} does not work anymore as soon as the $n$ is chosen one unit below the given lower bound. For the sake of comprehension, we shall see, before continuing, some particular examples of $\mathcal{F}_{k,t,q}$ for different values of $k$, $t$ and $q$. In Example~\ref{ex:ZS}, we exhibit functions corresponding to the tightness of Corollary \ref{cor:zs}, that is, when $t=0$: we fix $k=6$ and consider $q\in \{0,1\}$.  In Example~\ref{ex:1S}, we exhibit a function corresponding to the tightness of Corollary \ref{cor:1s},  that is, when $t=1$, where, in order to illustrate how the construction works for a larger $q$, we consider the case $k=7$, $q=4$. 

\begin{example}\label{ex:ZS}
Fix $k=6$ and $t=0$. We analyze two cases corresponding to $q\in\{0,1\}$. According to the hypothesis of Corollary \ref{cor:zs}, for $q=0$  we choose $s=0$ and, for $q=1$ we choose $s=1$. In both cases we take $n=\frac{k^2}{4} + \frac{q-s}{2}k+s-1$. 
\begin{itemize}
\item[(a)] Let $k=6$, $t=0$ and $q=0$. Thus, $s=0$ and $n=8$. Hence, $m=1$ and $r=2$. By definition,  $S=\emptyset$,  and $R=R_1\cup R_2$ where $R_1=\{1,2\}$ and $R_2=\{7,8\}$. Thus  $\mathcal{F}_{6,0,0}=\{f,-f\}$ where $f$ is the function represented by: \[-\,-\,+\,+\,+\,+\,-\,-,\]
where the $+$'s and $-$'s represent the values $+1$ or, respectively, $-1$  of $f$ at the corresponding position.

\item[(b)] Let $k=6$, $t=0$ and $q=1$. Thus, $s=1$, $n=9$, and, hence, $m=1$ and $r=3$. By definition,  $R=R_1\cup R_2$ where $R_1=\{1,2,3\}$ and $R_2=\{7,8,9\}$, and $S=S_1\cup S_2$ with $S_i=\{a_1^i \}\subseteq R_i$, for $i\in\{1,2\}$, such that $a_1^2\leq a_1^1+6$.  These gives the following family  $\mathcal{F}_{6,0,1}=\{\pm g_i| 1 \le i \le 6\}$, where:
\begin{center}
$g_1: \;\;\; +\,-\,-\,+\,+\,+\,+\,-\,-$

$g_2: \;\;\; -\,+\,-\,+\,+\,+\,+\,-\,-$

$g_3: \;\;\; -\,+\,-\,+\,+\,+\,-\,+\,-$

$g_4: \;\;\; -\,-\,+\,+\,+\,+\,+\,-\,-$

$g_5: \;\;\; -\,-\,+\,+\,+\,+\,-\,+\,-$

$\mbox{ }\, g_6: \;\;\; -\,-\,+\,+\,+\,+\,-\,-\,+.$
\end{center}
\end{itemize}

Observe that the functions $f$ and $-f$ in case (a) fulfill $|f([8])| = |-f([8])| = 0$ and have no ZS $6$-blocks, while  all functions $\pm g_i$ in case (b) satisfy $|g_i([9])| =|-g_i([9])| = 1$, for $1 \le i \le 6$, and do neither have ZS $6$-blocks. 
\end{example}

\begin{example}\label{ex:1S}
Let  $k=7$, $t=1$ and $q=4$. According to the hypothesis of Corollary \ref{cor:1s}, we choose $s=0$ and $n=\frac{k^2}{6} + \frac{q-s}{3} k + s  - \frac{1}{2}-1 = 16$, which gives $m=2$ and $r=2$. By definition,  $S=\emptyset$ and $R=R_1\cup R_2\cup R_3$, where $R_1=\{1,2\}$, $R_2=\{8,9\}$, and $R_3=\{15,16\}$. Thus  $\mathcal{F}_{7,1,4}=\{h,-h\}$ where $h$ is represented by: 
\[-\,-\,+\,+\,+\,+\,+\,-\,-\,+\,+\,+\,+\,+\,-\,-.\]
Observe that the functions $h$ and $-h$ fulfill $|h([16])| =|-h([16])| =  4$ and have no  $1$-sum $7$-blocks.
\end{example}

\begin{theorem}\label{thm:sharpness_k-blocks-t-sum}
Let $k$, $t$ and $q$ be integers such that $q \ge 0$, $0 \le t < k$,  $t \equiv k \,({\rm mod}\, 2)$, and $\frac{1}{2(t+2)}k^2 + \frac{q-s}{t+2}k - \frac{t}{2} + s>k$, where
$s \in [0,t+1]$ is the unique integer satisfying $s \equiv q + \frac{k-t-2}{2} \,({\rm mod} \, (t+2))$. Then, for
\[n = \frac{1}{2(t+2)}k^2 + \frac{q-s}{t+2}k - \frac{t}{2} + s - 1,\]
a function $f :[n] \rightarrow \{-1,1\}$ satisfies $|f([n])| = q$ and $|f(B)| > t$ for all $k$-blocks $B \subseteq [n]$ if and only if $f \in \mathcal{F}_{k,t,q}$.
\end{theorem}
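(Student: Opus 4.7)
The plan is to prove both implications of the theorem. Sufficiency, that every $f \in \mathcal{F}_{k,t,q}$ satisfies $|f([n])| = q$ and $|f(B)| > t$ for all $k$-blocks $B$, is a direct verification. Counting the $-1$'s, which sit exactly on $R \setminus S$ with $|R \setminus S| = (m+1)(r-s)$, gives $f([n]) = n - 2(m+1)(r-s)$; substituting the formulas for $n$, $m$, $r$ reduces this to $\pm q$. For a general $k$-block $B$, I would write $B$ as the canonical block $W_i = \{(i-1)k+1,\ldots,ik\}$ shifted by some $\delta$ positions, so $f(B) = f(W_i) + f(P) - f(Q)$, where $P,Q$ are length-$\delta$ prefixes of $W_{i+1}$ (or $R_{m+1}$) and $W_i$ respectively. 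For $\delta \le r$ the monotonicity $a_\ell^{i+1} \le a_\ell^i + k$ forces $f(P) \ge f(Q)$; for $\delta > r$ both prefixes overrun their $R$'s into all-$+1$ regions, giving $f(P) = f(Q)$. Combined with $f(W_i) = t+2$, one gets $f(B) \ge t+2$ in either case.

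For the necessity direction I would mirror the proof of Theorem \ref{thm:k-blocks-t-sum}, tracking equality in every step. By Lemma \ref{lem:k-blocks-t-sum} and the opening reduction of that proof, one may replace $f$ by $-f$ if necessary so that $f(B) \ge t+2$ for every $k$-block $B$. Writing $n = mk + r$ places us in the tight version of Case 1 of the theorem's proof, with $m = \frac{k+2q-2s-t-2}{2(t+2)}$ and $r = \frac{k-t}{2} + s - 1$. Then $\sum_{i=1}^m f(W_i) + f(R_{m+1}) = f([n]) \le q$ together with $f(W_i) \ge t+2$ force $f(R_{m+1}) \le 2s - r$, while applying $f \ge t+2$ to the last $k$-block $\{n-k+1,\ldots,n\} = T \cup R_{m+1}$, with $|T| = k-r$, forces $f(T) \ge k-r$. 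Since $k - r$ is the trivial upper bound on $f(T)$, both inequalities must be equalities: $T$ is entirely $+1$, $f(R_{m+1}) = 2s - r$, and each $f(W_i) = t+2$ exactly.

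The main obstacle is extending the all-$+1$'s structure from $W_m \setminus R_m$ to every $W_i \setminus R_i$. For each $i \le m-1$ and $\delta \in \{1,\ldots,k-1\}$, the $k$-block starting at $(i-1)k+1+\delta$ has weight $f(W_i) - C_i(\delta) + C_{i+1}(\delta) \ge t+2$, where $C_i(\delta) = \sum_{\ell=1}^\delta f((i-1)k+\ell)$, yielding the cumulative-sum chain $C_1(\delta) \le C_2(\delta) \le \cdots \le C_m(\delta)$ for every $\delta \in \{1,\ldots,k-1\}$, with the analogous tail extension $C_m(\delta) \le \sum_{\ell=1}^\delta f(mk+\ell)$ for $\delta \le r$. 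I would then induct on $j = 0, 1, \ldots, k-r-1$: knowing that the last $j$ entries of every $W_i$ are $+1$'s, compute $C_i(k-j-1) = (t+2) - j - f(ik-j) \in \{t+1-j,\, t+3-j\}$; since $C_m(k-j-1) = t+1-j$ is already known from the all-$+1$ tail of $W_m$, the chain $C_i(k-j-1) \le C_m(k-j-1)$ forces $f(ik-j) = +1$ for every $i$, completing the induction.

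Once the last $k-r$ entries of each $W_i$ are pinned to $+1$, we obtain $f(R_i) = (t+2) - (k-r) = 2s-r$, so $R_i$ contains exactly $s$ positions $a_1^i < \cdots < a_s^i$ of value $+1$. Finally, the cumulative inequalities $C_i(\delta) \le C_{i+1}(\delta)$ restricted to $\delta \le r$ read $|S_i \cap \{(i-1)k+1,\ldots,(i-1)k+\delta\}| \le |S_{i+1} \cap \{ik+1,\ldots,ik+\delta\}|$ for every such $\delta$, which is equivalent by a standard comparison of the $\ell$-th smallest elements to $a_\ell^{i+1} - ik \le a_\ell^i - (i-1)k$, i.e. $a_\ell^{i+1} \le a_\ell^i + k$, closing the case $i \in [m]$. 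The degenerate case $r = 0$ (which forces $k = t+2$, $s = 0$, and $q = m(t+2)$) must be handled separately: there $f(W_i) \ge t+2 = k$ immediately gives $W_i$ all $+1$'s, so $f \equiv 1$ on $[n]$, as prescribed by the definition with $R = S = \emptyset$.
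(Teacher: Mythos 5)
Your proposal is correct, and in the necessity direction it takes a genuinely different route from the paper's. For sufficiency both arguments ultimately rest on the same fact --- the condition $a_{\ell}^{i+1} \le a_{\ell}^{i} + k$ guarantees that sliding a window one period to the right never decreases its weight --- but the paper organizes this by lower-bounding the number $|U|$ of $+1$'s that a block collects from the two $R$-sets it meets (via a two-case analysis), whereas you phrase it as prefix-sum domination $f(P) \ge f(Q)$, which is a cleaner reformulation. The real divergence is in necessity: the paper avoids induction entirely by writing, for each $j$, the telescoping decomposition $[n] = \bigl(\bigcup_{i<j} B_i\bigr) \cup R_j \cup \bigl(\bigcup_{i \ge j} C_i\bigr)$ with $C_i = T_i \cup R_{i+1}$, which bounds every $f(R_j)$ at once and then squeezes $f(B_j)$ and $f(C_j)$ between $t+2$ and $t+2$ to force $f(T_j)=|T_j|$ in one stroke; the monotonicity condition is then obtained by contradiction, exhibiting an explicit block of weight at most $t$. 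You instead pin down only the tail $T_m$ from the last $k$-block, and propagate the all-$(+1)$ structure backwards through the chain $C_1(\delta) \le \cdots \le C_m(\delta)$ of cumulative prefix sums by induction on the distance from the right end of each $W_i$; the same chain restricted to $\delta \le r$ then yields $a_{\ell}^{i+1} \le a_{\ell}^{i}+k$ directly via the standard equivalence between domination of counting functions and termwise comparison of order statistics. The paper's shifted-block decomposition is shorter; your version is more uniform in that a single family of sliding-window inequalities delivers both the rigidity of the $T_i$'s and the monotonicity constraint. Your separate treatment of $r=0$ matches the paper's.
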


\begin{proof}
By assumption we have  $n \ge k$. First of all, we will show that, for any $f \in \mathcal{F}_{k,t,q}$, $|f([n])| = q$ and $|f(B)| \ge t+2$ hold for all $k$-blocks $B \subseteq [n]$. So let $f \in \mathcal{F}_{k,t,q}$ be such that $f(x) = -1$ if $x \in R \setminus S$ and $f(x) = 1$ else for adequate subsets $R, S \subseteq [n]$ (the case of function $-f$ is analogous). Similar to previous theorem, we write $n$ as follows:
\[n = \left(\frac{k+2q-2s-t-2}{2(t+2)}\right) k + \frac{k - t}{2} + s -1.\]
Let $m = \frac{k+2q-2s-t-2}{2(t+2)}$ and $r = \frac{k - t}{2} + s-1$. 
Note that, since  $t \le k-2$ and $s-1\leq t$, we have

 \[r = \frac{k - t}{2} + s-1 \ge \frac{k - (k-2)}{2} -1 = 0, \mbox{ and} \]
 \[r =  k -\frac{k + t}{2} + s - 1 \le k -\frac{(t+2)+t}{2} + t  = k-1.\]
Hence, $n = mk+r$ represents the division of $n$ by $k$ with remainder $r$. Observe that $n \geq k$ implies that $m \ge 1$. 
Now, by the definition of $f \in \mathcal{F}_{k,t,q}$, we obtain
\begin{align*}
f([n]) &= n - 2|R \setminus S|\\
& = (mk+r) - 2r(m+1) + 2 s(m+1)\\
& = m(k-2r+2s) - r + 2s\\
& = m \left(k-2 \left(\frac{k - t}{2} + s-1\right)+2s\right) - \left(\frac{k - t}{2} + s-1\right) + 2s\\
& = m(t+2) + \frac{t-k}{2}+s+1\\
& = \frac{k+2q-2s-t-2}{2} + \frac{t-k}{2}+s+1 = q,
\end{align*}
as claimed.\\

Now, we split $[n]$ into $m$ consecutive $k$-blocks $B_1, B_2, \ldots, B_m$, and a remainder block $R$, where $B_i = \{(i-1)k+1, (i-1)k+2, \ldots, ik\}$, and $R = \emptyset$ in case $r=0$ and $R=\{mk+1,...mk+r\}$ otherwise. Note that $R=R_{m+1}$, and, if $r \neq 0$, the first $r$ elements of each $B_i$ are precisely the elements of $R_i = \{(i-1)k+1,(i-1)k+2, \ldots, (i-1)k+r\}$ (as defined before the theorem), and so $T_i := B_i \setminus R_i$ is the block containing the remaining $k-r$ elements of $B_i$. Hence, we can write 
\begin{equation}\label{eqn:partition}
[n] = \left(\bigcup_{i=1}^{m} B_i\right) \cup R_{m+1} = \left(\bigcup_{i=1}^{m} (R_i  \cup T_i) \right) \cup R_{m+1}.
\end{equation}
Let $B \subseteq [n]$ be any $k$-block. We will show that $|f(B)| \ge t+2$. If $r = 0$, then $f(x) = 1$ for all $x \in  [n]$ and, hence, $f(B) \ge k \ge t+2$ and we are done. Thus we may assume that $r > 0$. Then either there is a $j \in [m]$ such that $B \subseteq R_j \cup T_j \cup R_{j+1}$, or (provided $m \ge 2$) there is a $j \in [m-1]$ such that $B \subseteq T_j \cup R_{j+1} \cup T_{j+1}$. Now define
\[
U = \{u \in B \cap (R_j \cup R_{j+1}) \;|\; f(u) = 1\}
\]
and recall that $S_i = \{a_1^i, a_2^i, \ldots, a_s^i\}$ is the set of elements of $R_i$ having $f$-value equal to $1$, for each $i \in [m+1]$. In order to prove that $|f(B)| \ge t+2$, we will show first that $|U| \ge s$. So we suppose to the contrary that $|U| < s$. We distinguish two cases.\\

\noindent
\emph{Case 1: $U \cap R_j \neq \emptyset$ and $U \cap R_{j+1} \neq \emptyset$.}\\
Let $y \in [s]$ be the first index with $a_{y}^j \in U \cap R_j$ and let $z \in [s]$ be the last index with $a_{z}^j \in U \cap R_{j+1}$. Then $s > |U| = |U \cap R_j| + |U \cap R_{j+1}| = (s-y+1) + z$ and, thus, $z + 1 < y$. Moreover, since $|U| < s$, neither $S_j$ nor $S_{j+1}$ are fully contained in $U$. Thus, $y > 1$ and $z < s$, and $a_{y-1}^j, a_{z+1}^{j+1} \notin B$. Since $z + 1 < y$ and the block $B$ lies in between the two positions $a_{y-1}^j$ and $a_{z+1}^{j+1}$, we obtain 
\[a_{y-1}^j +k < a_{z+1}^{j+1} \le a_{y-1}^{j+1},\]
a contradiction to the definition of $S_j$ and $S_{j+1}$.\\

\noindent
\emph{Case 2: $U \cap R_j \neq \emptyset$ and $U \cap R_{j+1} = \emptyset$, or $U \cap R_j = \emptyset$ and $U \cap R_{j+1} \neq \emptyset$.} \\
By symmetry reasons, assume, without loss of generality, that $U \cap R_j \neq \emptyset$ and $U \cap R_{j+1} = \emptyset$. Then $a_1^{j+1} \notin U$ and, since $|U| < s$, $a_1^j \notin U$. This implies that $a_1^{j}$ and $a_1^{j+1}$ lie one on the left side and one on the right side of $B$, leading to the contradiction $a_1^j + k < a_1^{j+1}$.\\

Hence, we have shown that $|U| \ge s$. Recall that  $f(T_j)=|T_j|$ by definition. In case that $B \subseteq R_j \cup T_j \cup R_{j+1}$, using the fact that $T_j \subseteq B$ and $|T_j| = |B_j \setminus R_j| = k - |R_j| = k - r = k - \left(\frac{k - t}{2} + s-1\right) =\frac{k + t}{2} - s+1$, we obtain
\begin{align*}
f(B) &= f(B \cap (R_j \cup R_{j+1})) + f(B \cap T_j) \\
&=f(B \setminus T_j) + |T_j|\\
&= 2|U| - |B \setminus T_j| + |T_j|\\
&= 2|U| - k + 2|T_j|\\
&\ge 2s - k + 2 \left(\frac{k + t}{2} - s+1\right)\\
&= t+2.
\end{align*}
On the other side, if $B \subseteq T_j \cup R_{j+1} \cup T_{j+1}$, then $R_{j+1} \subseteq B$ and, with $|R_{j+1}| = r = \frac{k - t}{2} + s-1$, it follows that
\begin{align*}
f(B) &= f(B \cap (T_j \cup T_{j+1})) + f(B \cap R_{j+1}) \\
&=f(B \setminus R_{j+1}) + f(R_{j+1})\\
&= |B \setminus R_{j+1}| - |R_{j+1}| + 2|U|\\
&= k - 2|R_{j+1}| + 2|U|\\
&\ge k - 2 \left(\frac{k - t}{2} + s-1\right) + 2s\\
&= t+2.
\end{align*}

Hence, we have proved that $f(B) \ge t+2$ for every $k$-block $B \subseteq [n]$. \\

Finally, we will show that every function $f:[n] \rightarrow \{-1,1\}$  satisfying  $|f([n])| = q$ and $|f(B)| \ge t+2$ for all $k$-blocks $B \subseteq [n]$ is contained in $\mathcal{F}_{k,t,q}$. So let $f$ be such a function. Assume first that $f([n]) \ge 0$. By Lemma \ref{lem:k-blocks-t-sum}, we can assume that $f(B) \ge t+2$ for every $k$-block $B \subseteq [n]$. Consider first the case $r = \frac{k-t}{2}+s-1 = 0$. Then we have $s = \frac{t+2-k}{2}$ and, hence, $m = \frac{q+k-t-2}{t+2}$. Moreover, if we split the set $[n]$ into $m$ $k$-blocks $B_1, B_2, \ldots, B_m$, the conditions $f(B_i) \ge t+2$, for $1 \le i \le m$, and $k \ge t+2$ give 
\[q \le f([n]) = \sum_{i=1}^m f(B_i) \le m(t+2) = q+k-t-2 \le q,\]
implying that $f(B_i) = t+2 = k$ for all $1 \le i \le m$. Hence, $f(x)=1$ for all $x \in [n]$ and, thus, $S = \emptyset$ and we are done. Therefore, we can assume from now on that $r > 0$.

We consider again the partition given in (\ref{eqn:partition}) and let $C_i = T_i \cup R_{i+1}$, for each $i \in [m]$. Then $C_i$ is a $k$-block for each $i \in [m]$ and, we can write, for any $j \in [m+1]$,
\[
[n] = \left(\bigcup_{i=1}^{j-1} B_i\right) \cup R_j \cup \left(\bigcup_{i=j}^{m} C_i\right).
\]
Further, we have
\begin{equation*}
q = f([n]) = \left(\sum_{i=1}^{j-1} f(B_i) \right) + f(R_j)+ \left(\sum_{i=j}^{m} f(C_i) \right) \ge m(t+2) + f(R_j). 
\end{equation*}
Since $m(t+2) = \frac{k+2q-2s-t-2}{2} = q-\frac{t-k}{2}-s-1$, it follows that
\begin{equation*}
f(R_j)\le \frac{t-k}{2}+s+1.
\end{equation*}
Thus, as $|T_j| =k-r = \frac{k + t}{2} - s+1$, we obtain
\begin{align*}
t+2 &\le f(B_j) = f(R_j) + f(T_j)\\
&\le f(R_j) + |T_j|\\
&\le  \left(\frac{t-k}{2}+s+1\right) +  \left(\frac{k + t}{2} - s+1\right) = t+2,
\end{align*}
Analogously, 
\begin{align*}
t+2 &\le f(C_j) =  f(T_j) + f(R_{j+1})\\
&\le |T_j| + f(R_{j+1})\\
&\le  \left(\frac{k + t}{2} - s+1\right) + \left(\frac{t-k}{2}+s+1\right)= t+2.
\end{align*}
Since $j$ was taken arbitrarily, it follows that $f(T_i) = |T_i| = \frac{k + t}{2} - s+1$ for every $i \in [m]$, and $f(R_i) = \frac{t-k}{2}+s+1$ for every $i \in [m+1]$. Because of $f(T_i) = |T_i|$, it follows that $f(x) = 1$ for every $x \in T_i$ and $i \in [m]$. It remains to show that the $f$-values on the $R_i$'s follow one of the patterns given in the definition of $\mathcal{F}_{k,t,q}$. With this aim, let $\alpha$ and $\beta$ be the number of $f$-values of $R_i$ which are equal to $-1$ and $1$, respectively. Then we have
\[f(R_i) = \beta - \alpha = \frac{t-k}{2}+s+1, \mbox{ and } \]
\[|R_i| = \alpha + \beta = \frac{k-t}{2}+s-1.\]
Solving this equality system, we obtain $\alpha = \frac{k-t}{2}-1$ and $\beta = s$. Let $S_i = \{{a}_1^i, {a}_2^i, \ldots, {a}_s^i\}$ be the set of elements of $R_i$ having ${f}$-value equal to $1$, where ${a}_1^i < {a}_2^i< \ldots < {a}_s^i$. We need to show that the condition ${a}^{i+1}_{\ell} \le {a}^i_{\ell} + k$ is satisfied for every $i \in [m]$ and $\ell \in [s]$. So assume for contradiction that ${a}^{i+1}_{\ell} > {a}^i_{\ell} + k$ for some $i \in [m]$ and $\ell \in [s]$. Let ${B}$ be the $k$-block $\{{a}_{\ell}^i+1, {a}_{\ell}^i+2, \ldots, {a}_{\ell}^i+k\}$. Then $T_i \subseteq {B} \subseteq R_i \cup T_i \cup R_{i+1}$. Moreover, ${B} \cap {S}_i = \{{a}_{\ell+1}^i, {a}_{\ell+2}^i, \ldots, {a}_s^i\}$ and, since ${a}^{i+1}_{\ell} > {a}^i_{\ell} + k$, ${B} \cap {S}_{i+1} \subseteq \{{a}_1^{i+1}, {a}_2^{i+1}, \ldots, {a}_{\ell-1}^{i+1}\}$. This implies
\begin{align*}
{f}({B}) &= {f}({B} \cap R_i) + f(T_i) + {f}({B} \cap R_{i+1})\\
&= 2|{B} \cap {S}_i|-|{B} \cap R_i| + |T_i| + 2|{B} \cap {S}_{i+1}| - |{B} \cap R_{i+1}|\\
&= 2|{B} \cap {S}_i|+ |T_i| + 2|{B} \cap {S}_{i+1}| - |{B} \setminus T_i|\\
&\le 2(s-\ell) + \left(\frac{k + t}{2} - s+1\right) + 2(\ell-1) - \left( \frac{k-t}{2}+s-1\right)\\
&=t,
\end{align*}
a contradiction to the assumption that every $k$-block has weight at least $t+2$. Hence, we have shown that the condition ${a}^{i+1}_{\ell} \le {a}^i_{\ell} + k$ is satisfied for every $i \in [m]$ and ${\ell} \in [s]$, and, therefore ${f} \in \mathcal{F}_{k,q,t}$, with ${f}(x) = -1$, for $x \in R \setminus S$, and $f(x) = 1$ else, where $S = \bigcup_{i=1}^{m+1} {S}_i$. Finally, if ${f}([n]) \le 0$, it follows analogously (multiplying everything by ($-1$)) that ${f} \in \mathcal{F}_{k,q,t}$.
\end{proof}

To conclude this section we point out that Theorem \ref{thm:k-blocks-t-sum} can be extended to cover the range $q=o(n)$ and we present an infinite version of it which is a direct consequence of Lemma \ref{lem:k-blocks-t-sum}.

\begin{theorem}\label{thm:o(n)}
Let $f:\mathbb{Z}^+\to \{-1,1\}$ be a function such that $|f([n])|\leq \frac{n}{\omega (n)}$ where $\omega(n)\to \infty$ as $n \to \infty$. Then, for every even $k\geq 2$, there are infinitely many ZS $k$-blocks.
\end{theorem}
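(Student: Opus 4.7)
The plan is to argue by contradiction, leveraging Lemma \ref{lem:k-blocks-t-sum} with $t=0$ to produce a sign-consistent tail and then extract a linear lower bound on $|f([n])|$ that contradicts the hypothesis $|f([n])| \le n/\omega(n) = o(n)$.

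Suppose toward contradiction that only finitely many ZS $k$-blocks exist. Then I would fix $N_0$ so that every $k$-block $B_i := \{i, i+1, \ldots, i+k-1\}$ with $i \ge N_0$ satisfies $f(B_i) \neq 0$; since $k$ is even, each such sum satisfies $|f(B_i)| \ge 2$. The next step is to transport Lemma \ref{lem:k-blocks-t-sum} into this infinite setting via truncation: for every $M > N_0 + k - 1$, apply the lemma to the shifted function $g(j) := f(N_0 - 1 + j)$ on $[1, M - N_0 + 1]$. Since no $k$-block inside $[N_0, M]$ has sum $0$, the lemma forbids the simultaneous occurrence of a positive-sum and a negative-sum $k$-block there. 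Letting $M$ be arbitrary and, if necessary, replacing $f$ by $-f$, I conclude that $f(B_i) \ge 2$ for every $i \ge N_0$.

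To close the argument, for any $m \ge 1$ I would split $[N_0, N_0 + mk - 1]$ into $m$ consecutive disjoint $k$-blocks; each contributes at least $2$ to the sum, while the initial segment $[N_0 - 1]$ contributes at least $-(N_0 - 1)$. Setting $n := N_0 + mk - 1$, this yields
\[
f([n]) \;\ge\; 2m - (N_0 - 1) \;=\; \frac{2(n - N_0 + 1)}{k} - (N_0 - 1).
\]
Combining with $|f([n])| \le n/\omega(n)$, dividing by $n$, and letting $n \to \infty$ gives $2/k \le 0$, a contradiction.

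The only subtle point is the passage from the finite-range Lemma \ref{lem:k-blocks-t-sum} to the infinite range: one must verify that the sign of $f(B_i)$ cannot flip as $i$ grows past $N_0$, which is exactly what the shift-and-truncate device guarantees, since it enforces the positive/negative dichotomy uniformly on every truncation $[N_0, M]$. Once this is in hand, the rest is just the pigeonhole-style linear growth estimate pitted against the sublinear upper bound on partial sums.
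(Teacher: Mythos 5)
Your proposal is correct and uses the same essential ingredients as the paper's proof: Lemma \ref{lem:k-blocks-t-sum} with $t=0$ to force a uniform sign on all $k$-blocks past some point, followed by summing disjoint consecutive $k$-blocks (each contributing at least $2$ by parity) to obtain linear growth of $f([n])$, contradicting the $o(n)$ bound. The only difference is organizational — you argue by global contradiction and take a limit, while the paper shows directly that every interval $[n_0,n]$ with $n>(k+2)(n_0-1)$ contains a ZS $k$-block and then iterates — but this is the same argument.
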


\begin{proof}
Fix $n_0\equiv 1$ (mod $k$) and consider $n > (k+2)(n_0-1)$ and such that $n \equiv 0$ (mod $k$), and $\omega(m)\geq k$ for every $m\geq n_0$. Consider the partition of the interval $[n_0,n]$ into $\frac{n-n_0+1}{k}$ disjoint $k$-blocks, $B_1$, $B_2$, ... ,$B_{\frac{n-n_0+1}{k}}$.
Suppose that $f(B_i)\neq 0$ for all $i\in\{1,...,\frac{n-n_0+1}{k}\}$. We know, by taking $t=0$ in Lemma \ref{lem:k-blocks-t-sum}, that if $f(B_i)<0<f(B_j)$ for some $i,j\in\{1,...,\frac{n-n_0+1}{k}\}$ then there will be a ZS $k$-block. So we may assume, without lost of generality, that all blocks $B_i$ satisfy $f(B_i)\geq 1$, and hence, by parity, $f(B_i)\geq 2$. Then 
\[ 2 \left(\frac{n-n_0+1}{k}\right)\le \sum_{i=1}^{\frac{n-n_0+1}{k}}f(B_i)  \le \left| \sum_{j=n_0}^{n}f(j)\right| \le |f([n])|+n_0 -1 \le \frac{n}{\omega (n)}+n_0 -1 \le  \frac{n}{k}+n_0 -1, 
 \]
from which it follows that,
\[
\frac{n}{k} \le n_0-1+\frac{2(n_0-1)}{k}=(k+2)\frac{(n_0-1)}{k},
\]
a contradiction to $n>(k+2)(n_0-1)$.

Since $n_0$ is fixed but can be arbitrarily large, we can build a sequence $\{n_0, n_1,n_2,...\}$ such that, in each of the intervals $[n_j,n_{(j+1)}]$, $j\geq 0$, there is a ZS $k$-block. Hence, there are infinitely many ZS $k$-blocks.
\end{proof}

\begin{remark}
Observe that the domains $[n]$ or $\mathbb{Z}^+$ of the theorems of this section can be replaced by any finite sequence $A = \{a_1, a_2, \ldots, a_n\}$ or, respectively, an infinite sequence $A = (a_i)_{i \ge 1}$, where a $k$-block is a set of $k$ consecutive terms in the sequence.
\end{remark}

\section{Existence of zero-sum bounded gap subsequences}

Let $d$ and $k$ be two positive integers and let $A = \{a_1, a_2, \ldots, a_k\}$ be a sequence of pairwise different integers in increasing order. We will call $A$ a \emph{$d$-bounded gap sequence} if $a_{i+1} - a_i \le d$ for all $1 \le i \le k-1$. In particular, the case $d = 1$ corresponds to a $k$-block. A zero-sum $d$-bounded gap sequence $A$ of length $k$ will be denoted \emph{ZS $(d,k)$-block} and so ZS $(1,k)$-blocks are just ZS $k$-blocks. 

\begin{theorem}\label{thm:(d,k)blocks}
Let $k$, $d$ and $n$ be positive integers such that $k$ is even, $d \ge 2$, and $n \ge \max\{k, \frac{d+1}{8}(k^2-2sk+4s-4) + 1\}$, where $s \in \{0,1\}$ with $s \equiv \frac{k-2}{2} \;({\rm mod}\; 2)$. Then, for any function $f:[n] \rightarrow \{-1,1\}$ such that $|f([n])| \le  \min \{n-k,\frac{d-1}{d+1}n\}$, there is a ZS $(d,k)$-block.
\end{theorem}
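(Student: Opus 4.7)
The plan is to adapt the architecture of the proof of Theorem~\ref{thm:k-blocks-t-sum} to the $(d,k)$-block setting. First I would establish an intermediate-value lemma for $(d,k)$-blocks, use it to reduce to the case when every $(d,k)$-block has weight $\ge 2$, and then derive a contradiction by a careful partition of $[n]$ into ``inflated'' super-blocks.

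The first step is the $(d,k)$-analogue of Lemma~\ref{lem:k-blocks-t-sum}: \emph{if there exist $(d,k)$-blocks $S,T\subseteq[n]$ with $f(S)<0<f(T)$, then a ZS $(d,k)$-block exists.} Any two $(d,k)$-blocks can be connected by a chain in which consecutive terms differ in exactly one element (a single-index swap preserving the gap-$\le d$ condition; for $d\ge 2$ such a swap is always available, e.g.\ by decrementing the rightmost index whenever the interior gaps are saturated at $d$). Each swap changes $f$ by an element of $\{-2,0,2\}$, and since $k$ is even all $(d,k)$-block weights are even, so a parity-aware discrete intermediate value argument produces a $(d,k)$-block of weight $0$ along the chain.

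With this lemma in hand, assume for contradiction that no ZS $(d,k)$-block exists. Then WLOG every $(d,k)$-block $B\subseteq[n]$ satisfies $f(B)\ge 2$, and the goal is to contradict $|f([n])|\le\min\{n-k,\tfrac{d-1}{d+1}n\}$. I would partition $[n]$ into $m$ consecutive super-blocks $L_1,\ldots,L_m$ of equal length plus a remainder $R$, with the super-block length chosen to absorb the ``inflation factor'' $(d+1)/2$ suggested by matching the $d=1$ case of Corollary~\ref{cor:zs}. Inside each $L_i$, applying $f(B)\ge 2$ to a carefully chosen family of $(d,k)$-blocks inside $L_i$ (both ordinary $k$-blocks and maximum-gap blocks $\{a,a+d,\ldots,a+(k-1)d\}$, together with their single-element shifts) produces a system of linear inequalities whose solution, analogous to the $f(T_i)$-versus-$f(R_i)$ computation in Theorem~\ref{thm:k-blocks-t-sum}, yields a local lower bound of the form
\[
f(L_i)\;\ge\;\tfrac{d-1}{d+1}\,|L_i|+c(k,d,s),
\]
for an explicit positive constant $c(k,d,s)$. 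Summing over $i$ and handling $R$ using the hypothesis $|f([n])|\le n-k$ (which guarantees at least $k/2$ positions of the minority sign are available, so the local inequalities are not vacuous) gives $f([n])>\tfrac{d-1}{d+1}n$, contradicting the hypothesis. The parameter $s\in\{0,1\}$ would enter as a parity correction for $\tfrac{k-2}{2}\pmod 2$ exactly as in Theorem~\ref{thm:k-blocks-t-sum}.

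The main obstacle I expect is the third step: pinning down the precise super-block length (expected to be $\tfrac{(d+1)k}{2}$ up to the parity correction by $s$) and identifying the precise family of $(d,k)$-blocks inside each $L_i$ whose associated inequalities are simultaneously tight, with the constant $c(k,d,s)$ emerging in a form that yields the exact threshold $\tfrac{d+1}{8}(k^2-2sk+4s-4)+1$ after summation. I expect the extremal sequences to be a $(d+1)/2$-inflation of the family $\mathcal{F}_{k,0,q}$ of Theorem~\ref{thm:sharpness_k-blocks-t-sum}, and a careful analysis of this inflated structure should determine $c(k,d,s)$ and close the computation.
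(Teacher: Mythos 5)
Your proposal and the paper's proof diverge completely, and the divergence matters: the paper does \emph{not} redo the super-block counting for $(d,k)$-blocks. It reduces to the already-proved $k$-block case. Assuming WLOG $f([n])\ge 0$, one picks the residue class $S=S_i$ modulo $d$ containing the fewest $+1$'s; since $|f([n])|\le\frac{d-1}{d+1}n$ forces $|f^{-1}(1)\setminus S|\ge\frac{d-1}{d}|f^{-1}(1)|\ge f([n])$, one may delete exactly $f([n])$ of the $+1$'s lying outside $S$. The surviving sequence has sum $0$ and length $n-f([n])\ge\max\{k,\frac14(k^2-2sk+4s)\}$, so Corollary~\ref{cor:zs} (Theorem~\ref{thm:k-blocks-t-sum} with $t=q=0$) gives a ZS $k$-block in it; and because no element of the residue class $S$ was deleted, consecutive survivors are at distance at most $d$, so that $k$-block is a $(d,k)$-block of the original sequence. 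No interpolation lemma for $(d,k)$-blocks is needed for existence (the paper proves Lemma~\ref{la:ZS-(k,d)-block} only later, for the characterization of extremal sequences). Note also that the hypothesis $|f([n])|\le n-k$ enters simply to guarantee the surviving sequence still has at least $k$ terms, not to make local inequalities non-vacuous.

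As a standalone argument, your proposal has a genuine gap at its center. The entire quantitative content --- the claim that a ``carefully chosen family'' of $(d,k)$-blocks inside each super-block $L_i$ yields $f(L_i)\ge\frac{d-1}{d+1}|L_i|+c(k,d,s)$ with an explicit $c>0$, and that summing these recovers the exact threshold $\frac{d+1}{8}(k^2-2sk+4s-4)+1$ --- is asserted, not proved, and you yourself flag it as the unresolved ``main obstacle.'' The difficulties are real: a single $(d,k)$-block can be spread over up to $(k-1)d+1$ positions, roughly twice your proposed super-block length, so the constraints do not localize to one $L_i$; the correct period in the extremal structure is $(d+1)\frac{k}{2}+d-1$ rather than $\frac{(d+1)k}{2}$; and the extremal family $\mathcal{F}_{d,k}$ (especially for $s=1$, with conditions (F2)(iii)--(iv)) is considerably more intricate than a ``$(d+1)/2$-inflation'' of $\mathcal{F}_{k,0,q}$, so the tight inequalities you would need to identify are not the obvious ones. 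Until that step is carried out in full, the proposal is a plan rather than a proof; the deletion-plus-residue-class reduction is the idea you are missing, and it collapses the whole problem into Theorem~\ref{thm:k-blocks-t-sum}.
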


\begin{proof}
 Let $f:[n] \rightarrow \{-1,1\}$ be a function such that $|f([n])| \le  \min \{n-k,\frac{d-1}{d+1}n\}$. Let $F^+ = f^-(1)$ and $F^-=f^{-1}(-1)$ and suppose, without loss of generality, that $|F^+| \ge |F^-|$. Then $|f([n])|= f([n]) = |F^+| - |F^-| = n - 2|F^-|$ and, together with $|f([n])| \le \frac{(d-1)}{d+1}n$, we obtain
\begin{equation}\label{F-}
|F^-| = \frac{1}{2}(n - f([n])) \ge \frac{1}{d+1} n
\end{equation}
\begin{equation}\label{F+}
|F^+| = n - |F^-| \le \frac{d}{d+1}n.
\end{equation}
For $1 \le i \le d$, let $S_i = \{ x \in [n] \;|\; x \equiv i \;({\rm mod}\; d)\}$. Choose $S = S_i$ such that $|S_i \cap F^+|$ is minimum. Then $|S \cap F^+| \le \frac{1}{d}|F^+|$ and, thus, in $[n]\setminus S$ there are at least $|F^+| - \frac{1}{d}|F^+| = \frac{d-1}{d}|F^+|$ positions with $f$-value $1$. Now, with inequalities (\ref{F-}) and (\ref{F+}), we can deduce that there are at least
\[\frac{d-1}{d} |F^+| = |F^+| - \frac{1}{d}|F^+|  \ge |F^+| - \frac{1}{d+1}n \ge |F^+| - |F^-| = f([n])\]
positions in $[n] \setminus S$ with $f$-value $1$. Hence, we can delete the first $f([n])$ $1$'s appearing in the sequence $f(1), f(2), \ldots, f(n)$ which are not on positions contained in $S$ and we obtain a new sequence of $1$'s and $-1$'s with $n - f([n]) \ge n - \frac{(d-1)}{d+1}n = \frac{2}{d+1} n$ terms and total sum equal to $0$. Since by hypothesis $n \ge  \frac{d+1}{8}(k^2-2sk+4s-4) + 1$, where $s \in \{0,1\}$ with $s \equiv \frac{k-2}{2} \;({\rm mod}\; 2)$, and $|f([n])| \le  \min \{n-k,\frac{d-1}{d+1}n\}$, it follows that  $n - f([n])\ge \max \{k, \frac{2}{d+1} n \}$, implying that
\begin{align*}
n - f([n]) &\ge \max \left\{k, \left\lceil \frac{2}{d+1} n\right\rceil \right\} \\
&= \max \left\{k, \left\lceil\frac{1}{4}(k^2-2sk+4s-4) + \frac{2}{d+1}\right\rceil \right\} \\
&= \max \left\{k, \frac{1}{4}(k^2-2sk+4s) \right\},
\end{align*}

satisfying the hypothesis of Theorem \ref{thm:k-blocks-t-sum} with $t = q = 0$. Thus, in the new sequence, we can guarantee the existence of a ZS $k$-block $B$. This set $B$ is, by construction, a ZS $(d,k)$-block in the original sequence.
\end{proof}

Observe that, if $k \ge 6$, the assumptions $n \ge \frac{d+1}{8}(k^2-2sk+4s-4) + 1$ and $|f([n])| \le \frac{d-1}{d+1}n$ imply already that $n \ge k$ and $|f([n])| \le n-k$. This is because of the following computations. Assuming $k \ge 6$, notice first that, if either $s=0$ or $s=1$, $k^2-2sk+4s-4 \ge k^2-2k = k(k-2)$. Hence, with $d \ge2$, we have, on the one hand,
\[\frac{d+1}{8}(k^2-2sk+4s-4)+1 \ge \frac{d+1}{8} k (k-2)+1 \ge \frac{d+1}{2}k > k.\] 
On the other hand, we have
\[n \ge \frac{d+1}{8}(k^2-2sk+4s-4) + 1 \ge \frac{d+1}{8}k(k-2) + 1 \ge \frac{d+1}{2}k + 1 > \frac{d+1}{2}k,\]
which implies that $n-k > n - \frac{2n}{d+1} = \frac{d-1}{d+1}n$.

Therefore, Theorem \ref{thm:(d,k)blocks} can be stated the following way for $k \ge 6$.

\begin{theorem}\label{thm:(d,k)blocks_k>4}
Let $k$, $d$ and $n$ be positive integers such that $k \ge 6$ is even, $d \ge 2$, and $n \ge\frac{d+1}{8}(k^2-2sk+4s-4) + 1$, where $s \in \{0,1\}$ with $s \equiv \frac{k-2}{2} \;({\rm mod}\; 2)$. Then, for any function $f:[n] \rightarrow \{-1,1\}$ such that $|f([n])| \le  \frac{d-1}{d+1}n$, there exists a ZS $(d,k)$-block.
\end{theorem}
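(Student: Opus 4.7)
The plan is to deduce Theorem \ref{thm:(d,k)blocks_k>4} directly from Theorem \ref{thm:(d,k)blocks} by showing that the extra upper bound $|f([n])| \le n-k$ appearing in the hypothesis of the latter becomes redundant once $k \ge 6$. Concretely, it suffices to verify the two numerical inequalities $n \ge k$ and $\tfrac{d-1}{d+1}n \le n-k$, since together they give $\min\{n-k,\tfrac{d-1}{d+1}n\} = \tfrac{d-1}{d+1}n$, thereby reducing the hypothesis of Theorem \ref{thm:(d,k)blocks} to the single constraint actually present in our statement. The second inequality is equivalent to $n \ge \tfrac{d+1}{2}k$, so the whole reduction comes down to proving $n > \tfrac{d+1}{2}k$.

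For this I would bound the quadratic in the lower bound for $n$. For $s \in \{0,1\}$ and $k \ge 6$, one checks $k^2 - 2sk + 4s - 4 \ge k(k-2)$ (the case $s=1$ is an identity, and the case $s=0$ follows from $(k-2)(k+2) \ge k(k-2)$). Then, using $k-2 \ge 4$ and $d \ge 2$,
\[
n \;\ge\; \frac{d+1}{8}(k^2 - 2sk + 4s - 4) + 1 \;\ge\; \frac{d+1}{8}\,k(k-2) + 1 \;\ge\; \frac{d+1}{2}k + 1.
\]
This single line immediately yields both $n > k$ (since $\tfrac{d+1}{2} \ge \tfrac{3}{2}$) and $n > \tfrac{d+1}{2}k$; multiplying the latter by $\tfrac{2}{d+1}$ gives $\tfrac{2n}{d+1} > k$, i.e., $\tfrac{d-1}{d+1}n < n - k$, as desired.

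With both inequalities in hand, every function $f$ satisfying the hypotheses of Theorem \ref{thm:(d,k)blocks_k>4} automatically satisfies those of Theorem \ref{thm:(d,k)blocks}, and the conclusion — existence of a ZS $(d,k)$-block — transfers verbatim. The mathematical content lies entirely in Theorem \ref{thm:(d,k)blocks}; there is no genuine obstacle here, only the elementary inequality chase above, which verifies that the $n - k$ term drops out of the min in the regime $k \ge 6$. In fact, the key computations are already spelled out in the paragraph immediately preceding the theorem statement, so the formal proof reduces to citing those inequalities together with Theorem \ref{thm:(d,k)blocks}.
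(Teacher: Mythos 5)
Your proposal is correct and follows exactly the paper's own route: the paper proves this theorem by the same reduction to Theorem \ref{thm:(d,k)blocks}, using the identical inequalities $k^2-2sk+4s-4 \ge k(k-2)$ and $n \ge \frac{d+1}{8}k(k-2)+1 \ge \frac{d+1}{2}k+1$ to show that $n > k$ and $\frac{d-1}{d+1}n < n-k$, so the $n-k$ term drops out of the min. Nothing is missing.
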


For  $k = 2$ and $k = 4$, it follows that, as soon as the natural condition $n \ge k$ is fulfilled, Theorem \ref{thm:(d,k)blocks} guarantees the existence of a ZS $(d,k)$-block for any function $f:[n] \rightarrow \{-1,1\}$ such that $|f([n])| \le  \min\{n-k, \frac{d-1}{d+1}n\}$. In case that $k \ge 6$, we will show in the next theorem that Theorem \ref{thm:(d,k)blocks_k>4} is sharp, meaning that, as soon as the $n$ is chosen one unit below of the given lower bound, there are examples of functions having $|f([n])|  = \frac{d-1}{d+1}n$, such that no $(d,k)$-block $B \subseteq [n]$ is a zero-sum set. Moreover, we will characterize the extremal sequences. With this aim, we will define, for given positive integers $d$ and $k$, the family of functions $\mathcal{F}_{d,k}$. 

Let $d$ and $k$ be positive integers, such that $k>4$ is even and $d\ge 2$, and consider $n = \frac{d+1}{8}(k^2-2sk+4s-4)$ where $s\in\{0,1\}$ with $s \equiv \frac{k-2}{2} \;({\rm mod}\; 2)$. First, note that 
\begin{align*}
 \frac{d+1}{8}\left(k^2-2sk+4s-4\right) &= \frac{d+1}{8}\left(k^2-2sk-2k+2k+4s-4\right)\\
 &=\frac{d+1}{8}\left(k^2-2sk-2k\right)+\frac{d(k+2s-2)+k+2s-2}{4}\\
 &=\frac{d+1}{8}\left(k^2-2sk-2k\right)+\frac{d(k-2s-2)+4ds+k+2s-2}{4}\\
 &=\frac{d+1}{8}k(k-2s-2)+\frac{d(k-2s-2)-(k-2s-2)+2k-4+4ds}{4}\\
 &=\frac{d+1}{8}k(k-2s-2)+\frac{(d-1)(k-2s-2)}{4}+\frac{k}{2}-1+ds\\
&=\frac{k-2s-2}{4}\left( (d+1)\frac{k}{2}+d-1 \right)+\frac{k}{2}-1+ds.
\end{align*}
Thus, $n=\frac{d+1}{8}(k^2-2sk+4s-4)=mb+r$, where 

\begin{equation}\label{eq:mbr}
m=\frac{k-2s-2}{4}, \, \mbox{ } \, b=(d+1)\frac{k}{2}+d-1\,  \mbox{ and }  \, r=\frac{k}{2}-1+ds. 
\end{equation}

Note that $k \ge 6$ implies that $m>0$. We will decompose the interval $[n]$ into $m+1$ blocks of length $r$, and $m$ blocks of length $b-r$. Let
\[R_i=\left\{(i-1)b+1, \dots ,(i-1)b + r\right\} \mbox{ for } 1\le i\le m+1 \mbox{, and }\]
\[T_i=\left\{(i-1)b +\frac{k}{2}+ds, \dots ,ib\right\} \mbox{ for } 1\le i\le m.\]
Hence,
\[[n]=\left(\bigcup_{i=1}^m(R_i\cup T_i)\right)\cup R_{m+1}.\]

Now we define the family of functions $\mathcal{F}^+_{d,k}$ as follows. We say that $f:[n] \rightarrow \{-1,1\}$ is contained in $\mathcal{F}^+_{d,k}$ if and only if $f$ satisfies the following conditions (F1) and (F2). 
\begin{itemize}
\item[(F1)] If $s=0$, 
\[f(x) = \left\{
\begin{array}{rl}
-1&\mbox{ if } x \in R_i \mbox{ for some } 1\le i\le m+1, \mbox{ and}\\
1&\mbox{ if } x \in T_i \mbox{ for some } 1\le i\le m.
\end{array}
\right.
\]
\item[(F2)] If $s=1$, let 
\begin{enumerate}
\item[(i)] $f(T_i) = |T_i| = b-r=\frac{k}{2}d$ for all $1 \le i \le m$;
\item[(ii)] $f(R_i) = d - \frac{k}{2} + 1$ for all $1 \le i \le m+1$.
\end{enumerate}
Moreover, let $R_i(1) = f^{-1}(1) \cap R_i$, for $1 \le i \le m+1$, and let $\mathcal{T}_i \subseteq R_{i} \cup T_i \cup R_{i+1}$ be the maximal block such that $f(\mathcal{T}_i)= |\mathcal{T}_i|$. Then let $f$ satisfy, additionally,
\begin{enumerate}
\item[(iii)] if $R_i(1)$ is not a block, then $|\mathcal{T}_i \setminus T_i| \ge d$, for $1 \le i \le m$;
\item[(iv)] if $R_i(1)$ is a block and $R_i \cap \mathcal{T}_i = \emptyset$, then $R_{i+1}(1)$ is a block and there are at most $\frac{k}{2}-1$ $(-1)$'s between $R_{i}(1)$ and $R_{i+1}(1)$.
\end{enumerate}
\end{itemize}
Finally, we set $\mathcal{F}^-_{d,k} = \{-f \;|\; f \in \mathcal{F}^+_{d,k}\}$ and $\mathcal{F}_{d,k} = \mathcal{F}^+_{d,k} \cup \mathcal{F}^-_{d,k}$. \\

For the sake of comprehension we shall see some particular examples. For  $d=2$ we analyze two cases corresponding to $s=0$ and $s=1$, namely $k=6$ and $k=8$.

\begin{example}
Let $d=2$ and $k=6$, thus $s=0$ and $n=12$. Then, the unique function $f \in \mathcal{F}^+_{2,6}$ is represented by
\[-\,-\,+\,+\,+\,+\,+\,+\,+\,+\,-\,-\]
where $R_1=\{1,2\}$, $T_1=\{3,\dots,10\}$ and $R_2=\{11,12\}$.
Note that, this function  satisfies $|f([n])| = 4=\frac{d-1}{d+1}n$ and has no ZS $(2,6)$-block.
\end{example}

\begin{example}
Let $d=2$ and $k=8$, thus $s=1$ and $n=18$. The functions from family $\mathcal{F}^+_{2,8}$ are described below. Here, $m=\frac{k-2s-2}{4}=1$, $b=(d+1)\frac{k}{2}+d-1=13$, $r=\frac{k}{2}-1+ds=5$, and so $R_1=\{1,\dots,5\}$, $T_1=\{6,\dots,13\}$ and $R_2=\{14,\dots,18\}$. We use capital letters to denote the different sequences of length $r=5$ and weight $d - \frac{k}{2} + 1=-1$:

\vspace{.4cm}
\hspace{1.3cm} A: $\,+\,+\,-\,-\,-$ \hspace{1.3cm} E: $\,+\,-\,+\,-\,-$  \hspace{1.3cm}  I: $\,-\,+\,-\,-\,+$

\hspace{1.3cm} B: $\,-\,+\,+\,-\,-$  \hspace{1.3cm} F: $\,+\,-\,-\,+\,-$  \hspace{1.3cm} J: $\,-\,-\,+\,-\,+$

\hspace{1.3cm} C: $\,-\,-\,+\,+\,-$  \hspace{1.3cm} G: $\,+\,-\,-\,-\,+$

\hspace{1.3cm} D: $\,-\,-\,-\,+\,+$  \hspace{1.3cm} H: $\,-\,+\,-\,+\,-$
\vspace{.3cm}

Accordingly to rules (F2) $(i)$ to $(iv)$, all the functions $f \in \mathcal{F}^+_{2,8}$ are represented by the following sequences:

\[X\,+\,+\,+\,+\,+\,+\,+\,+\,Y,\]
where
\[
Y \in \left\{\begin{array}{ll}
\{A\}, & \mbox{if } X \in \{A,E,F,H\};\\
\{A,B\}, & \mbox{if } X = B;\\
\{A,B,C\}, & \mbox{if } X = C;\\
\{A,B,C,D,E,F,G,H,I,J\}, & \mbox{if } X = D;\\
\{A,E,F,G\}, & \mbox{if } X \in \{G,I,J\}.\\
\end{array}
\right.
\]

Observe that all functions $f \in \mathcal{F}^+_{2,8}$ satisfy $|f([n])|= 6=\frac{d-1}{d+1}n$ and have no ZS $(2,8)$-blocks. 
\end{example}

Before stating the theorem showing sharpness for Theorem \ref{thm:(d,k)blocks_k>4}, we need to prove the following lemmas.
\begin{lemma}\label{la:connectedG}
Let $G$ be a connected graph with vertex set $V$, and, for integers $d \ge 1$ and $q$, let $g:V \longrightarrow d \mathbb{Z} + q$ be a function such that $|g(x) - g(y)| \in \{0, d\}$ for any pair of adjacent vertices $x, y \in V$. Then, for vertices $u, v \in V$ such that $g(u)  <  g(v)$ and for any $p \in d \mathbb{Z} + q$ with $g(u) \le p \le g(v)$, there exists a vertex $w \in V$ with $g(w) = p$.
\end{lemma}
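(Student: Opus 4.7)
The plan is to prove this via a discrete intermediate value argument along a path in $G$. Since $G$ is connected, I can fix a path $u = x_0, x_1, \ldots, x_\ell = v$ and record the values $a_i := g(x_i)$. The adjacency hypothesis gives $|a_{i+1} - a_i| \in \{0, d\}$, so each consecutive difference $a_{i+1} - a_i$ lies in $\{-d, 0, d\}$. All $a_i$ lie in the arithmetic progression $d\mathbb{Z} + q$, and the sequence goes from $a_0 = g(u)$ up to $a_\ell = g(v)$, changing by at most $d$ at each step.

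Next, I would fix a value $p \in d\mathbb{Z} + q$ with $a_0 \le p \le a_\ell$ and consider the index $i^\ast := \max\{i : 0 \le i \le \ell,\; a_i \le p\}$. This set is nonempty because $a_0 \le p$. If $i^\ast = \ell$, then $a_\ell \le p \le a_\ell$ forces $p = a_\ell = g(v)$ and $w = v$ works. Otherwise $i^\ast < \ell$ and by maximality $a_{i^\ast + 1} > p \ge a_{i^\ast}$, so the step $a_{i^\ast + 1} - a_{i^\ast}$ is strictly positive, hence equal to $d$. This yields the sandwich
\[a_{i^\ast} \le p < a_{i^\ast + 1} = a_{i^\ast} + d.\]
Since $p$ and $a_{i^\ast}$ both lie in $d\mathbb{Z} + q$, they are congruent modulo $d$, and the interval $[a_{i^\ast}, a_{i^\ast}+d)$ contains exactly one element of $d\mathbb{Z} + q$, namely $a_{i^\ast}$ itself. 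Hence $p = a_{i^\ast} = g(x_{i^\ast})$, and $w = x_{i^\ast}$ is the desired vertex.

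There is no real obstacle here; the only subtlety is remembering that although $|a_{i+1} - a_i| \in \{0, d\}$, the signed difference can be negative, which is why one must take the last index with $a_i \le p$ rather than arguing monotonically. The proof uses connectedness only to produce the path and does not need any stronger structural assumption on $G$.
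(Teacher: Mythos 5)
Your proof is correct and follows essentially the same route as the paper: walk along a path from $u$ to $v$ and apply a discrete intermediate value argument, using that each step changes $g$ by $0$ or $\pm d$ and that $p$ lies in the same residue class $d\mathbb{Z}+q$. The only cosmetic difference is that you take the last index with $a_i \le p$ and conclude $p = a_{i^\ast}$, whereas the paper takes the last index with $g(u_i) < p$ and concludes $p = g(u_{i+1})$; both are sound.
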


\begin{proof}
Since $G$ is connected, there is a path $P = u_1 \ldots u_m$ with $u_1 = u$ and $u_m = v$. Let $p \in d \mathbb{Z} + q$ with $g(u) \le p \le g(v)$. If $p = g(u)$ or $p = g(v)$, we are done. So assume that $p$ is such that $g(u) < p < g(v)$. Let $i$ be the largest index such that $g(u_i) < p$. Then $p \le g(u_{i+1})$. On the other side, since $g(u_i) < p\le g(u_{i+1})$ and $g(u_i), p \in d \mathbb{Z} + q$, it follows that $d + g(u_i) \le p$. Hence, 
\[
g(u_{i+1}) = (g(u_{i+1}) - g(u_i)) + g(u_i) =|g(u_{i+1}) - g(u_i)| + g(u_i) \le d + g(u_i) \le p,
\]
and, thus, $g(u_{i+1}) = p$.
\end{proof}

\begin{lemma}\label{la:ZS-(k,d)-block}
Let $k$, $d$ and $n$ be positive integers such that $d \ge 2$ and $f:[n] \rightarrow \{-1,1\}$ a function on $[n]$. If there are $(k,d)$-blocks $S, T \subseteq [n]$ such that $f(S) < 0$ and $f(T) > 0$, then there is a $(k,d)$-block $B$ with $f(B) = 0$.
\end{lemma}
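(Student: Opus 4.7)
The plan is to mirror the strategy of Lemma~\ref{lem:k-blocks-t-sum} via a discrete ``intermediate value'' argument, now in the more flexible setting of $(d,k)$-blocks, using the graph-theoretic tool provided by Lemma~\ref{la:connectedG}. Note first that $f(A)\equiv k\pmod 2$ for every $(d,k)$-block $A$, so the conclusion $f(B)=0$ tacitly requires $k$ to be even; this is consistent with all applications of the lemma in Section~3, so I will assume $k$ even. Build a graph $G$ whose vertex set is the collection of $(d,k)$-blocks in $[n]$, with two blocks adjacent iff their symmetric difference has size $2$ (equivalently, they differ in exactly one element). For adjacent $A, A'$, writing $c \in A \setminus A'$ and $c' \in A' \setminus A$, one has $f(A') - f(A) = f(c') - f(c) \in \{-2,0,2\}$, hence $|f(A) - f(A')| \in \{0,2\}$, and every block weight lies in $2\mathbb Z$. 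Thus $f$, regarded as a function on $V(G)$, satisfies the hypotheses of Lemma~\ref{la:connectedG} with parameters $d=2$ and $q=0$.

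Assuming $G$ is connected, the proof concludes at once: apply Lemma~\ref{la:connectedG} to the vertices $S$ (with $f(S)<0$) and $T$ (with $f(T)>0$) at the intermediate value $p=0$, producing a $(d,k)$-block $B$ with $f(B)=0$. So the content of the proof is entirely the connectedness of $G$, and I would establish it in two steps. First, the consecutive $k$-blocks $C_i = \{i, i+1, \ldots, i+k-1\}$, $1 \le i \le n-k+1$, form a path in $G$ since $C_i$ and $C_{i+1}$ differ in the single element $i$ versus $i+k$. Second, every $(d,k)$-block is $G$-connected to some $C_i$: given a non-consecutive $A = \{a_1 < a_2 < \cdots < a_k\}$, let $j$ be the largest index with $a_{j+1} - a_j \ge 2$, and swap $a_k$ for $a_{j+1} - 1$. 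The resulting set is a valid $(d,k)$-block: the only new gap is $a_{j+1} - 1 - a_j = (a_{j+1} - a_j) - 1 \in [1, d-1]$, while all gaps to the right of $a_{j+1}$ originally equal $1$ by maximality of $j$ and are preserved. Moreover $\sum_i a_i$ strictly decreases by $a_k - (a_{j+1} - 1) = k - j \ge 1$, so iterating the swap must terminate, and termination happens exactly when no gap exceeds $1$, i.e., at a consecutive $k$-block.

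The main obstacle is just the combinatorial verification of this ``reduction swap'' — checking both that it produces a valid $(d,k)$-block and that the element-sum strictly decreases — which is elementary but needs care in handling the boundary case $j = k-1$. Once connectedness of $G$ is in hand, the discrete intermediate value property furnished by Lemma~\ref{la:connectedG} finishes the proof with no further computation.
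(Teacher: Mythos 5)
Your proposal is correct and follows essentially the same route as the paper: the same graph on $(d,k)$-blocks with adjacency given by $|B\cap B'|=k-1$, combined with the discrete intermediate-value Lemma~\ref{la:connectedG}. The only difference is that you supply an explicit proof of connectedness (the sum-decreasing reduction swap toward a consecutive block) where the paper merely asserts it, and you rightly flag the parity caveat that the conclusion $f(B)=0$ requires $k$ even, which is implicit in the paper's choice of $q$ and in all of its applications.
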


\begin{proof}
We define the graph $G=(V,E)$ as follows. Let $V$ be the set of all $(k,d)$-blocks of $[n]$. For, $B, B' \in V$, let $BB' \in E$ if and only if $|B \cap B'| = k-1$. Define the function $g:V \rightarrow 2\mathbb{Z}+q$ by $g(B) = f(B)$, where $q = 0$ if $k$ is even, and $q = 1$ if $k$ is odd. Then $|g(B) - g(B')| \in \{0,2\}$ for any pair of adjacent vertices $B, B' \in V$. It is not difficult to see that there is a way to transform, in a finite number of steps $\sigma$, one $(k,d)$-block $B$ into another $B'$ going through a sequence $B_0, B_1, \ldots, B_{\sigma}$ of different $(k,d)$-blocks, with $B_0 = B$ and $B_{\sigma} = B'$, such that consecutive members of the sequence differ by exactly one element, i.e. $|f(B_i)-f(B_{i+1})| \in \{0,2\}$ for $0 \le i \le \sigma-1$. This implies that the graph $G$ is connected. By Lemma \ref{la:connectedG}, if there are vertices $S, T \in V$ such that $g(S) < 0$ and $g(T) > 0$, then there is a vertex $B \in V$ with $g(B) = 0$. Hence, provided that $f(S) <0$ and $f(T) >0$, we have proven the existence of a $(k,d)$-block $B$ in $[n]$ with $f(B)=0$.
\end{proof}

Now we can prove the sharpness theorem.

\begin{theorem}\label{thm:sharpness_(d,k)blocks}
Let $k$, $d$ and $n$ be positive integers such that $k \ge 6$ is even, $d \ge 2$, and $n = \frac{d+1}{8}(k^2-2sk+4s-4)$, where $s \in \{0,1\}$ with $s \equiv \frac{k-2}{2} \;({\rm mod}\; 2)$. Then a function $f:[n] \rightarrow \{-1,1\}$ satisfies $|f([n])|  = \frac{d-1}{d+1}n$ and has no ZS $(d,k)$-blocks if and only if $f \in \mathcal{F}_{d,k}$.
\end{theorem}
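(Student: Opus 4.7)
The proof has two directions, and by the symmetry $f\leftrightarrow -f$ we may assume throughout that $f([n])\ge 0$, i.e.\ work with $\mathcal{F}^+_{d,k}$.

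\textbf{Sufficiency.} Given $f\in\mathcal{F}^+_{d,k}$, I would first compute $f([n])$ by summing over the blocks $R_1,T_1,\ldots,R_{m+1}$ using (F1) or (F2) together with $n=mb+r$ from (\ref{eq:mbr}); this is routine algebra and yields $\frac{(d-1)n}{d+1}$. To show no ZS $(d,k)$-block exists, by Lemma~\ref{la:ZS-(k,d)-block} it suffices to prove $f(B)>0$ (hence $\ge 2$ by parity, as $k$ is even) for every $(d,k)$-block $B$. The key geometric input is that consecutive $R$-clusters are separated by a $T_i$ of length $b-r$; if $B$ meets two distinct $R_i$'s, then the span between its extremal meeting points is $\ge b-r+1$, and the gap constraint $a_{j+1}-a_j\le d$ forces at least $\lceil(b-r+1)/d\rceil=k/2+2$ consecutive members of $B$ to lie in the intervening $T$-positions, giving $|B\cap F^+|\ge k/2+1$ and $f(B)\ge 2$. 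If $B$ meets a single $R_i$, then $|B\cap F^-|\le r\le k/2$ already gives $f(B)\ge 2$. The $s=1$ case is more delicate because the extra $+1$ inside each $R_i$ reduces the negative mass per cluster; conditions (F2)(iii)--(iv) are precisely the local rules blocking the remaining dangerous configurations, and their verification reduces to a finite case analysis of how $B$ can distribute across $T_i$, $R_i$, $R_{i+1}$.

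\textbf{Necessity.} Assume $f([n])\ge 0$, so $|F^+|=\frac{d}{d+1}n$ and $|F^-|=\frac{n}{d+1}$. By Lemma~\ref{la:ZS-(k,d)-block} every $(d,k)$-block has positive $f$-sum, hence $\ge 2$ by parity. I would now re-run the deletion argument of Theorem~\ref{thm:(d,k)blocks} while tracking equalities: choose a residue class $S=S_i$ mod $d$ minimizing $|S\cap F^+|$, so $|S\cap F^+|\le\frac{n}{d+1}$, and the number of $+1$'s outside $S$ is exactly $f([n])$. Delete these $+1$'s to obtain $f':[n']\to\{-1,1\}$ of length $n'=\frac{2n}{d+1}$ with sum zero and no ZS $k$-block (any such pulls back to a ZS $(d,k)$-block in $f$, since every $d$ consecutive positions of $[n]$ contain a point of $S$, hence a survivor, so consecutive survivors are at distance $\le d$). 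A short calculation gives $n'=(k^2-2sk+4s-4)/4$, exactly one less than the threshold of Corollary~\ref{cor:zs} with $q=0$, so Theorem~\ref{thm:sharpness_k-blocks-t-sum} applied with $t=q=0$ forces $f'\in\mathcal{F}_{k,0,0}$.

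\textbf{Reconstruction and main obstacle.} The equality $|S\cap F^+|=\frac{n}{d+1}$ forces $F^+$ to be uniformly distributed across all residue classes mod $d$, which pins down how the deleted $+1$'s interleave with the surviving positions. Undoing the deletion inflates the two $(-1)$-blocks of size $k/2-1$ at the ends of each ``period'' of $f'$ into the clusters $R_i$ of $f$, and inflates the central $(+1)$-block of $f'$ into the $T_i$, recovering the decomposition of $[n]$ from (\ref{eq:mbr}). In the case $s=0$ this immediately yields (F1). The main obstacle is the $s=1$ case: $\mathcal{F}_{k,0,0}$ allows one extra $+1$ inside each extremal $(-1)$-block of $f'$, subject to a distance bound from Theorem~\ref{thm:sharpness_k-blocks-t-sum}; after inflating one must show that precisely the placements of that $+1$ allowed by (F2)(iii)--(iv), and no others, avoid creating a ZS $(d,k)$-block in $f$. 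This reduces to showing that any violation of (F2)(iii) or (F2)(iv) lets one assemble a ZS $(d,k)$-block by combining the extra $+1$'s with carefully chosen jumps of size $\le d$ across $T_i$, and this delicate local bookkeeping is the most subtle step of the proof.
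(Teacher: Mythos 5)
Your sufficiency sketch follows essentially the paper's route (all $-1$'s live in the $R_i$'s, each $R_i$ carries only $\tfrac{k}{2}-1$ of them, and the long run of $+1$'s separating consecutive clusters forces any $(d,k)$-block spanning two clusters to pick up more than $\tfrac{k}{2}$ ones), although for $s=1$ you only assert that (F2)(iii)--(iv) "block the remaining configurations" without the case analysis. The real problem is in the necessity direction. Your plan is to delete $f([n])$ of the $+1$'s lying outside a residue class $S$ minimizing $|S\cap F^+|$, invoke Theorem~\ref{thm:sharpness_k-blocks-t-sum} on the surviving sequence, and then reconstruct $f$. The reconstruction hinges on the claim that $|S\cap F^+|=\frac{n}{d+1}$, which you say "forces $F^+$ to be uniformly distributed across all residue classes mod $d$" and thereby "pins down how the deleted $+1$'s interleave". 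That claim is false: only the inequality $|S\cap F^+|\le\frac{1}{d}|F^+|=\frac{n}{d+1}$ is available, and equality can genuinely fail for extremal functions. Take $d=2$, $k=8$, $n=18$ and the member of $\mathcal{F}^+_{2,8}$ given in the paper's example by $X=D$, $Y=J$, i.e.\ $f^{-1}(-1)=\{1,2,3,14,15,17\}$. Here $|F^+|=12$ but $F^+$ contains $7$ even and $5$ odd positions, so $\min_i|S_i\cap F^+|=5<6=\frac{n}{d+1}$. So the premise of your reconstruction step does not hold, and no structural information about the placement of the deleted positions can be extracted this way.

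Even setting that aside, the reduction loses too much information to be undone cheaply: the deleted sequence $f'\in\mathcal{F}_{k,0,0}$ only records the order pattern of the survivors, and a group of $\tfrac{k}{2}-1$ minuses that is consecutive \emph{among survivors} could a priori be spread across $[n]$ with deleted $+1$'s interspersed. Ruling this out is exactly the content one still has to prove, and the paper does it directly on $[n]$: it first shows that every window of $b=(d+1)\tfrac{k}{2}+d-1$ consecutive integers contains at most $\tfrac{k}{2}-1$ minuses (otherwise one assembles a $(k,d)$-block of nonpositive sum from those $\tfrac{k}{2}$ minuses together with every $d$-th surviving position, contradicting Lemma~\ref{la:ZS-(k,d)-block}); an averaging identity over the decompositions $[n]=(\bigcup_{i<j}B_i)\cup R_j\cup(\bigcup_{i\ge j}C_i)$ then forces $f(T_i)=|T_i|$ and $f(R_i)=ds-\tfrac{k}{2}+1$ exactly, and explicit ZS $(k,d)$-block constructions dispose of any violation of (F2)(iii) or (iv). Your proposal defers all of this to "delicate local bookkeeping", so the characterization is not actually established; I would recommend abandoning the deletion/reconstruction detour and arguing on $[n]$ directly as above.
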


\begin{proof}
We first prove that all functions $f \in \mathcal{F}^+_{d,k}$ satisfy $f([n])  = \frac{d-1}{d+1}n$. Recall that  $|T_i|=b-r$ for all $1\leq i\leq m$, and $f(T_i)=|T_i|$ in all cases. Therefore

\begin{equation}\label{eq:sum}
f([n]) =  \sum_{i=1}^m f(T_i)+\sum_{i=1}^{m+1}f(R_i) 
=m(b-r)+\sum_{i=1}^{m+1} f(R_i).
\end{equation}

If $s=0$ then $f(R_i)=-|R_i|=-r$, and so (\ref{eq:sum}) becomes

\begin{equation}\label{eq:s0}
f([n]) = m(b-r)-(m+1)r.
\end{equation}

Replacing in (\ref{eq:s0}) the values of $m$, $b$ and $r$ from (\ref{eq:mbr})  corresponding to $s=0$,  we obtain:

\begin{align*}
f([n]) &=\left(\frac{k-2}{4}\right)\left(\left(d+1\right)\frac{k}{2}+d-1-\left(\frac{k}{2}-1\right)\right)-\left(\frac{k-2}{4}+1\right)\left(\frac{k}{2}-1\right)\\
&=\left(\frac{k-2}{4}\right)\left(\frac{k+2}{2}\right)d-\left(\frac{k+2}{4}\right)\left(\frac{k-2}{2}\right)\\
&=\frac{d-1}{8}\left(k^2-4\right)\\
&=\frac{d-1}{d+1}n
\end{align*}

as claimed.\\

If $s=1$ then  $f(R_i) =\left(d - \frac{k}{2} + 1\right)$, and so (\ref{eq:sum}) becomes

\begin{equation}\label{eq:s1}
f([n]) = m(b-r)+(m+1)\left(d - \frac{k}{2} + 1\right).
\end{equation}

Replacing in (\ref{eq:s1}) the values of $m$, $b$ and $r$ from (\ref{eq:mbr})  corresponding to $s=1$,  we obtain:

\begin{align*}
f([n]) &=\left(\frac{k-4}{4}\right)\left(\left(d+1\right)\frac{k}{2}+d-1-\left(\frac{k}{2}+d-1\right)\right)+\left(\frac{k-4}{4}+1\right)\left(d-\frac{k}{2}+1\right)\\
&=\left(\frac{k-4}{4}\right)\left(\frac{k}{2}\right)d+\left(\frac{k}{4}\right)\left(d-\frac{k}{2}+1\right)\\
&=\frac{d}{8}\left(k^2-4k\right)+\frac{kd}{4}-\frac{k^2}{8}+\frac{k}{4}\\
&=\frac{d}{8}\left(k^2-2k\right)-\frac{2kd}{8}+\frac{kd}{4}-\frac{k^2}{8}+\frac{k}{4}\\
&=\frac{d}{8}\left(k^2-2k\right)-\frac{k^2-2k}{8}\\
&=\frac{d-1}{8}\left(k^2-2k\right)\\
&=\frac{d-1}{d+1}n
\end{align*}

as claimed.\\

Now we will prove that none of the functions $f \in \mathcal{F}^+_{d,k}$ contains a ZS $(d,k)$-block. As we previously used $R_i(1) = f^{-1}(1) \cap R_i$, let  $R_i(-1) = f^{-1}(-1) \cap R_i$ for every $1 \le i \le m+1$. We first prove that, in both cases $s=0$ and $s=1$, we have
\begin{equation}\label{eq:R-1}
R_i(-1)=\frac{k}{2}-1.
\end{equation}
To see this, recall that by definition, if $s=0$ then $R_i(-1)=|R_i|=r=\frac{k}{2}-1$ as claimed. For the case $s=1$, recall that $f(R_i)=d-\frac{k}{2}+1$ and, since $f(R_i)=R_i(1)-R_i(-1)$, the equality in (\ref{eq:R-1}) follows using the fact that $R_i(1)+R_i(-1)=|R_i|=r=\frac{k}{2}-1+d$.

Observe now that, in all sequences given by a function $f \in \mathcal{F}_{d,k}$, the $(-1)$'s are contained in the $R_i$ sets. In other words, $f^{-1}(-1)\cap T_i=\emptyset$ for every $1\leq i\leq m$. As a ZS $(d,k)$-block must contain the same number of $(-1)$'s and $(+1)$'s, namely $\frac{k}{2}$, then, by (\ref{eq:R-1}), a ZS $(d,k)$-block necessarily intersects two $R_i$ sets, which is impossible since, by construction, between the last $(-1)$ from $R_i$ and the first $(-1)$ from $R_{i+1}$ there are $d\left(\frac{k}{2}+1\right)$ $(+1)$'s.

Hence, we have proved that the functions $f \in \mathcal{F}^+_{d,k}$ satisfy $f([n]) = \frac{d-1}{d+1}n$ and have no ZS $(d,k)$-blocks and, thus, it follows that the functions $f \in \mathcal{F}_{d,k}$ fulfill $|f([n])| = \frac{d-1}{d+1}n$ and do not contain ZS $(d,k)$-blocks. Now we show the necessity of the statement.


To show that the family $\mathcal{F}_{d,k}$ contains all functions $f:[n] \rightarrow \{-1,1\}$ with $|f([n])|  = \frac{d-1}{d+1}n$ and without ZS $(d,k)$-blocks, assume we have a function ${f}:[n] \longrightarrow \{-1,1\}$ such that ${f}([n])  = \frac{d-1}{d+1}n$ (the case ${f}([n])  = - \frac{d-1}{d+1}n$ is analogous) and such that it has no ZS $(k,d)$-blocks. By Lemma \ref{la:ZS-(k,d)-block}, all $(k,d)$-blocks have positive sum. Actually, because of $k$ being even, we may assume that all $(k,d)$-blocks have sum at least $2$.

Observe that
\[n = \frac{d+1}{8} (k^2-2sk+4s-4) = \frac{k-2s - 2}{4} \left((d+1) \frac{k}{2}+d-1 \right) + \frac{k}{2} +ds - 1.\]
Since $0 \le \frac{k}{2} +ds - 1 < (d+1) \frac{k}{2}+d-1$, the expression above represents the division of $n$ by $b = (d+1) \frac{k}{2}+d-1$ with rest $r = \frac{k}{2} +ds - 1$. Hence, we may divide the interval $[n]$ into $m = \frac{k-2s-2}{4}$ blocks $B_i$ of length $b$, with $1 \le i \le m$, and a rest block of length $r$. Moreover, each of the $b$-blocks $B_i$ will be tared into two parts: $R_i$ and $T_i$, where $|R_i| = \frac{k}{2}-1 = r$ and $|T_i| = d \left( \frac{k}{2}+1\right) = b-r$, where $1 \le i \le m$. The rest block of length $r$ will be called $R_{m+1}$. Formally, we have
\[R_i = \{(i-1)b + 1, \ldots, (i-1)b+r\}, \mbox{ and } T_{i'} = \{(i'-1)b+r+1, \ldots, i'b\},\]
where $1 \le i \le m+1$ and $1 \le i' \le m$.
Moreover, we define also the $b$-blocks $C_i = T_i \cup R_{i+1}$, for $1 \le i \le m$. Observe that 
\[[n] = \left(\bigcup_{i = 1}^{j-1} B_i\right) \cup R_j \cup \left(\bigcup_{i=j}^m C_i\right),\] for any $1 \le j \le m$.
\\

We will show first that any $b$-block has at most  $\frac{k}{2}-1$ $(-1)$'s. Suppose we have a $b$-block $B \subseteq [n]$ with a subset $S \subseteq B$ with $|S|=\frac{k}{2}$ and such that ${f}(s) = -1$ for all $s \in S$. Consider the set $B' = B \setminus S = \{b_1, b_2, \ldots, b_{|B'|}\}$, where $b_1 < b_2 < \ldots < b_{|B'|}$ and $|B'| = b - \frac{k}{2} = d \left(\frac{k}{2} +1\right) -1$. Then $S \cup \{b_d, b_{2d}, \ldots, b_{\frac{k}{2}d}\}$ is a $(k,d)$-block with sum at most $0$, which is not possible.

Having this, we can say now that ${f}(B_i), {f}(C_i) \ge d \left(\frac{k}{2}+1\right) - \frac{k}{2} + 1$ for all $1 \le i \le m$. Hence, we have, for any $1 \le j \le m+1$,

\begin{align*}
\frac{d-1}{8}(k^2-2sk+4s-4) = {f}([n]) &= \left(\sum_{i = 1}^{j-1} {f}(B_i)\right) + {f}(R_j) + \left(\sum_{i=j}^m {f}(C_i)\right)\\
& \ge \left( d \left(\frac{k}{2}+1\right) - \frac{k}{2} + 1 \right) m + {f}(R_j),
\end{align*}
which implies
\begin{align*}
{f}(R_j) &\le \frac{d-1}{8}(k^2-2sk+4s-4) - \left( d \left(\frac{k}{2}+1\right) - \frac{k}{2} + 1 \right) m\\
& = \frac{d-1}{8}(k^2-2sk+4s-4) - \left( d \left(\frac{k}{2}+1\right) - \frac{k}{2} + 1 \right) \frac{k-2s-2}{4}\\
& = \frac{d}{8} \left(k^2-2sk+4s-4 - (k+2)(k-2s-2)\right) \\
& \hspace{4ex}- \frac{1}{8} (k^2-2sk+4s-4 + (-k+2)(k-2s-2))\\
& = ds - \frac{k}{2} + 1.
\end{align*}
Now we have
\begin{align*}
d \left(\frac{k}{2}+1\right) - \frac{k}{2} + 1 &\le {f}(B_i) = {f}(R_i) + {f}(T_i) \\
&\le |T_i| + ds - \frac{k}{2} + 1\\
&= d \left(\frac{k}{2}+1-s \right) + ds - \frac{k}{2} + 1\\
&= d \left(\frac{k}{2}+1\right) - \frac{k}{2} + 1,
\end{align*}
which yields ${f}(T_i) = |T_i| = d \left(\frac{k}{2}+1-s \right)$ and ${f}(R_i) = ds - \frac{k}{2} + 1$. This fits to rule (F1) when $s=0$ and to rule (F2)(i) and (ii) when $s=1$. Hence, ${f}(t) = 1$ for all $t \in T_i$ and all $1 \le i \le m$, and $|R_i \cap {f}^{-1}(-1)| = \frac{k}{2}-1$ and $|R_i \cap {f}^{-1}(1)| = ds$ for all $1 \le i \le m+1$.
\\
If $s=0$, we have finished, as the only possible function ${f}$ is the following:

\[
{f}(x) = \left\{ \begin{array}{rl}
1, & \mbox{if } x \in T_i, 1 \le i \le m\\
-1, & \mbox{if } x \in R_i, 1 \le i \le m+1.
\end{array}
\right.
\]

So we assume from now on that $s=1$. We still have to show that $f$ satisfies rules (F2) (iii) and (iv). For simplicity, we define $R_i(1) = {f}^{-1}(1) \cup R_i$ and $R_i(-1) = {f}^{-1}(-1) \cup R_i$, for $1 \le i \le m+1$.\\

\noindent
{\it Claim 1: $f$ satisfies rule (F2)(iii).\\
Suppose that $R_i(1)$ is not a block.} Let $\mathcal{T}_i \subseteq R_i \cup T_i \cup R_{i+1}$ be a maximal block such that $T_i \subseteq \mathcal{T}_i$ and $f(\mathcal{T}_i) = |\mathcal{T}_i|$. Suppose that $|\mathcal{T}_i \setminus T_i| \le d-1$. Let $t$ be the first element of $R_{i+1}$ with ${f}(t) = -1$. Then $t = ib + \alpha$, for an integer $0 \le \alpha \le d-1$. Now consider the set $B = R_i(-1) \cup \{t-\frac{k}{2}d, \ldots, t-2d,t-d, t\}$. Let $t'$ be the last element from $R_i(-1)$. Since $T_i \subseteq \mathcal{T}_i$, $t' \ge t - |T_i| - d = t - \frac{k}{2}d-d$. Hence, $(t-\frac{k}{2}d) - t' \le d$ and, thus, $B$ is a $(k,d)$-block with $\frac{k}{2}$ $(-1)$'s and $\frac{k}{2}$ $1$'s, that is, a ZS $(k,d)$-block, which is a contradiction. Hence, $|\mathcal{T}_i \setminus T_i| \ge d$ and Claim 1 is satisfied.\\

\noindent
{\it Claim 2: $f$ satisfies rule (F2)(iv).\\
Suppose that $R_i(1)$ is a block and $R_i \cap \mathcal{T}_i = \emptyset$.} With this assumptions, $t = (i-1)b+\frac{k}{2}+d-1$, the last element from $R_i$, has $f(t) = -1$. Suppose now that $R_{i+1}(1)$ is not a block. Then, similarly as the case above, it is easy to see that $\{t, t+d, t+2d, \ldots, t+\frac{k}{2}d\} \cup R_{i+1}(-1)$ is a ZS $(k,d)$-block, a contradiction. Thus, $f$ satisfies rule (F2)(iv) and the claim is proved.\\

Hence, we have proved that ${f} \in \mathcal{F}_{d,k}$. Altogether, we have shown that a function $f:[n] \rightarrow \{-1,1\}$ satisfies $|f([n])|  = \frac{d-1}{d+1}n$ and has no ZS $(d,k)$-blocks if and only if $f \in \mathcal{F}_{d,k}$.
\end{proof}

\section{Decomposition into bounded gap sequences with bounded sum}

In this section, we will work in a more general framework dealing with $\{-r, s\}$-sequences, where $r$ and $s$ are positive integers. Our aim is to present a decomposition theorem of such sequences into bounded gap sequences of bounded sum. We will do this by means of a graph-theoretical approach. To this purpose, we shall give the following notation. \\

For positive integers $m$ and $n$, let $H_{n,m}$ be the directed graph consisting of the disjoint union of $m$ vertex sets $V_1, V_2, \ldots, V_m$ with $|V_i| =n$ for all $1 \le i \le m$, and arc set $E = \bigcup_{i=1}^{m-1} (V_i, V_{i+1})$, where $(V_i,V_{i+1}) = \{(v,w) \;|\; v \in V_i, w \in V_{i+1}\}$. Moreover, given positive integers $r, s$ and $m$, we define 
\[
L(r,s,m) := \{-rx+sy \;:\; x,y \in \mathbb{Z}, x,y \ge 0, x+y =m\},
\]
and, for any real number $q \in [-rm,sm]$,
\begin{align*}
\lambda(q,r,s,m) &:= \max\{p \in L(r,s,m) \;:\; p \le q\}, \mbox{ and}\\
\Lambda(q,r,s,m) &:= \min\{p \in L(r,s,m) \;:\; p \ge q\}.
\end{align*}

\begin{remark}\label{rem:lambda}
For positive integers $r,s,m$ and $q \in \mathbb{R}$, the following assertions hold.
\begin{enumerate}
\item[(i)] $\lambda(q,r,s,m) \le q \le \Lambda(q,r,s,m)$.
\item[(ii)] $\lambda(q,r,s,m) = \Lambda(q,r,s,m) = q$ if and only if $q \in L(r,s,m)$.
\item[(iii)] $\Lambda(q,r,s,m) - \lambda(q,r,s,m) \in \{0, r+s\}$.
\item[(iv)] For any positive integer $n$, 
\[
n\lambda(q,r,s,m) \le \lambda(qn,r,s,mn) \;\mbox{ and } \; n\Lambda(q,r,s,m) \ge \Lambda(qn,r,s,mn).
\]
\item[(v)] $L(r,s,m) = \{(r+s)y - rm \;:\; x \in [0,m]\cap \mathbb{Z}\} = \{-(r+s)x + sm \;:\; y \in [0,m]\cap \mathbb{Z}\}$.
\item[(vi)] $0 \in L(r,s,m)$ if and only if $\frac{r+s}{{\rm gcd}(r,s)}$ divides $m$.
\end{enumerate}
\end{remark}

\begin{proof}
Items (i), (ii) and (iii) follow straightforward from the definitions.\\
For (iv), let $p^* \in \{p \in L(r,s,m) \;:\; p \le q\}$. Then there are $x^*, y^* \ge 0$ with $x^*+y^* = m$ such that $p^* = -rx^*+sy^*$. Hence, $p^*n = -r(x^*n)+s(y^*n) \le qn$ with $x^*n+y^*n = mn$, and thus $p^*n \in \{p \in L(r,s,mn) \;:\; p \le qn\}$. This implies that
\[
n \cdot \{p \in L(r,s,m) \;:\; p \le q\} \subseteq \{p \in L(r,s,mn) \;:\; p \le qn\},
\]
which yields $n\lambda(q,r,s,m) \le \lambda(qn,r,s,mn)$. Analogously, we can show that 
\[
n \cdot \{p \in L(r,s,m) \;:\; p \ge q\} \subseteq \{p \in L(r,s,mn) \;:\; p \ge qn\},
\]
which gives $n\Lambda(q,r,s,m) \ge \Lambda(qn,r,s,mn)$.\\
Item (v) follows by replacing $x = m-y$ and, accordingly, $y = m-x$ for any element $p = -rx+sy \in L(r,s,m)$.\\
Lastly, for (vi), suppose first that $0 \in L(r,s,m)$. Then, with (v), we know that there is an $x \in \mathbb{Z} \cap [0,m]$ such that $0 = -(r+s) x + sm$, which is equivalent to 
\[
m = \frac{(r+s)x}{s} = \frac{r+s}{{\rm gcd}(r,s)} \cdot \frac{{\rm gcd}(r,s)x}{s}.
\]
Clearly, $\frac{r+s}{\rm gcd(r,s)} \in \mathbb{Z}$. To show that $\frac{r+s}{{\rm gcd}(r,s)}$ divides $m$, it remains to show that $\frac{{\rm gcd}(r,s)x}{s} \in \mathbb{Z}$, too. So let $a \in \mathbb{Z}$ be such that $r = a \cdot {\rm gcd}(r,s)$. Then, with $0 = -rx+sy$, we obtain
\[
sy = rx = a\cdot{\rm gcd}(r,s) \cdot x,
\]
and, since $1 = {\rm gcd}(a,s)$, it follows that $s$ divides ${\rm gcd}(r,s) x$. Hence, $\frac{{\rm gcd}(r,s)x}{s} \in \mathbb{Z}$. On the other side, if $\frac{r+s}{{\rm gcd}(r,s)}$ divides $m$, there is a $b \in \mathbb{Z}$ such $m = b \cdot \frac{r+s}{{\rm gcd}(r,s)}$. Then both, $x := \frac{sb}{{\rm gcd}(r,s)}$ and $y := \frac{rb}{{\rm gcd}(r,s)}$, are in $\mathbb{Z} \cap [0,m]$ and fulfill $x + y = m$. Thus,
\[
-r x + s y = -r \frac{sb}{{\rm gcd}(r,s)} + s \frac{rb}{{\rm gcd}(r,s)} = 0 \in L(r,s,m).
\]
\end{proof}

\begin{lemma}\label{la:H_nm}
Let $r, s, m , n$ be positive integers, and let $f: V(H_{n,m}) \rightarrow \{-r,s\}$ be a function on $V(H_{n,m})$. If there are directed $m$-paths $P$ and $Q$ in $H_{n,m}$ such that $f(V(P)) < f(V(Q))$, then, for any $p \in L(r,s,m)$ with $f(V(P)) \le p \le f(V(Q))$ there is a directed $m$-path $P^*$ such that $f(V(P^*)) = p$.
\end{lemma}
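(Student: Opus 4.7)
The plan is to mimic the strategy of Lemma \ref{la:ZS-(k,d)-block}: build an auxiliary graph on the set of directed $m$-paths, verify that the weight function has the ``gap $\in \{0,r+s\}$'' property along edges, and then invoke Lemma \ref{la:connectedG} to obtain an intermediate value.

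First I would introduce the graph $\mathcal{G}$ whose vertex set is the collection of all directed $m$-paths in $H_{n,m}$, with an edge between two paths $P=v_1v_2\cdots v_m$ and $P'=v_1'v_2'\cdots v_m'$ whenever they agree in all but exactly one layer, i.e.\ $v_i=v_i'$ for every $i$ except one. Since every tuple $(v_1,\ldots,v_m)\in V_1\times V_2\times\cdots\times V_m$ is a directed $m$-path in $H_{n,m}$ (the arcs $(V_i,V_{i+1})$ being complete bipartite), this $\mathcal{G}$ is the Hamming-type product graph on $V_1\times\cdots\times V_m$, which is plainly connected: any path can be transformed into any other by changing one coordinate at a time.

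Next I define $g:V(\mathcal{G})\to\mathbb{Z}$ by $g(P):=f(V(P))$. By construction, if $y$ of the $m$ vertices of $P$ have $f$-value $s$ and the remaining $x=m-y$ have $f$-value $-r$, then $g(P)=-rx+sy\in L(r,s,m)$. Using Remark \ref{rem:lambda}(v), every element of $L(r,s,m)$ lies in the arithmetic progression $-rm+(r+s)\mathbb{Z}$, so $g$ takes values in $(r+s)\mathbb{Z}+q$ with $q\equiv -rm\pmod{r+s}$. Moreover, if $P$ and $P'$ are adjacent in $\mathcal{G}$, they differ in exactly one vertex, and replacing a vertex of $f$-value $-r$ by one of value $s$ (or vice versa) changes the sum by $\pm(r+s)$, while replacing a vertex by another with the same $f$-value changes the sum by $0$. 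Hence $|g(P)-g(P')|\in\{0,r+s\}$ for every edge $PP'$ of $\mathcal{G}$.

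Finally, given paths $P,Q$ with $g(P)<g(Q)$ and a target $p\in L(r,s,m)$ with $g(P)\le p\le g(Q)$, we have $p\in(r+s)\mathbb{Z}+q$, so Lemma \ref{la:connectedG} (applied with $d=r+s$ to the connected graph $\mathcal{G}$ and the function $g$) yields a vertex $P^*\in V(\mathcal{G})$ with $g(P^*)=p$, i.e.\ a directed $m$-path $P^*$ in $H_{n,m}$ with $f(V(P^*))=p$, as required. I do not foresee a real obstacle here: connectivity of $\mathcal{G}$ is immediate, the residue class identification is routine via Remark \ref{rem:lambda}(v), and the one-vertex-swap computation giving $|g(P)-g(P')|\in\{0,r+s\}$ plugs directly into the hypothesis of Lemma \ref{la:connectedG}.
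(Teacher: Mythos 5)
Your proposal is correct and follows essentially the same route as the paper: the same auxiliary graph on directed $m$-paths (adjacent when they differ in one layer), the same connectivity argument by swapping one coordinate at a time, the same function $g(P)=f(V(P))$ landing in $(r+s)\mathbb{Z}-rm$, and the same appeal to Lemma \ref{la:connectedG} with $d=r+s$. If anything, you are slightly more explicit than the paper in verifying the edge condition $|g(P)-g(P')|\in\{0,r+s\}$, which is a welcome addition.
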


\begin{proof}

Let $G_{n,m}$ be the graph with vertex set
\[V(G_{n,m}) = \{P \;:\; P \mbox{ is a directed } m \mbox{-path in } H_{n,m}\}\]
and edge set
\[E(G_{n,m}) = \{PP' \;:\; |V(P) \cap V(P')| \ge m-1\}.\]
Observe first that $G_{n,m}$ is connected. Here for, we will show that, for any two vertices $P, P' \in V(G_{n,m})$, there is a path in $V(G_{n,m})$ joining them. Let $P = u_1u_2 \ldots u_m$ and $P' = v_1v_2 \ldots v_m$ in $H_{n,m}$. Let $R_0 = P$ and, for $1 \le i \le m$, let $R_i$ be the directed $m$-path consisting of the vertex set $(V(R_{i-1}) \cup\{v_i\}) \setminus \{u_i\}$. Then, clearly, $R_m = P'$ and, for $1 \le i \le m$, $|V(R_{i-1}) \cap V(R_i)| \ge m-1$. This implies that the set $\{R_0, R_1, \ldots, R_m\}$ induces a path in $G_{n,m}$ and, thus, we can conclude that $G_{n,m}$ is connected.

Let now 
\[g: V(G_{n,m}) \rightarrow (r+s)\mathbb{Z}-rm\] 
be a function defined by $g(P) = f(V(P))$. Observe that, for $P \in V(G_{n,m})$ and integers $x := |f^{-1}(-r) \cap V(P)|$ and $y := |f^{-1}(s) \cap V(P)|$, we have that $x + y = m$ and, together with Remark Remark \ref{rem:lambda} (v),
\[
g(P) = f(V(P)) = -rx + sy \in L(r,s,m) \subseteq (r+s)\mathbb{Z}-rm.
\]
In particular, $g$ is well defined.

Now let $P$ and $Q$ be directed paths in $H_{n,m}$ such that $g(P) < g(Q)$ and take $p \in L(r,s,m)$ with $g(P) \le p \le g(Q)$.  Then Lemma \ref{la:connectedG} yields the existence of a directed $m$-path $P^*$ with $ f(V(P^*)) = g(P^*) = p$.
\end{proof}

\begin{theorem}\label{thm:decomposition}
Let $r, s, m , n$ be positive integers, let $f: V(H_{n,m}) \rightarrow \{-r,s\}$ be a function on $V(H_{n,m})$, and let $q = \frac{1}{n} f(V(H_{n,m}))$. Then $H_{n,m}$ can be decomposed into $n$ vertex-disjoint directed $m$-paths $P_1, P_2, \ldots, P_n$ such that 
\[\lambda(q,r,s,m) \le f(V(P_i)) \le \Lambda(q,r,s,m),\]
for all $1 \le i \le n$.
\end{theorem}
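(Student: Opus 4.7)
The plan is to start with any decomposition of $H_{n,m}$ into $n$ vertex-disjoint directed $m$-paths (obtained, e.g., by choosing $m-1$ arbitrary bijections between consecutive layers) and iteratively improve it by local exchanges until every path's weight lies in the target interval $[\lambda(q,r,s,m),\Lambda(q,r,s,m)]$. The crucial structural observation is that for any two paths $P_j,P_k$ in a decomposition, the subgraph of $H_{n,m}$ induced on $V(P_j)\cup V(P_k)$ is a copy of $H_{2,m}$ (all arcs between consecutive layers are present), so Lemma \ref{la:H_nm} applies inside it.

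Now suppose some path $P_i$ violates the bound; without loss of generality $f(V(P_i))>\Lambda(q,r,s,m)$, whence $f(V(P_i))\ge \Lambda(q,r,s,m)+(r+s)$ by the $(r+s)$-spacing of $L(r,s,m)$. The identity $\sum_k f(V(P_k))=nq$ then forces some $P_j$ with $f(V(P_j))<q$ and hence $f(V(P_j))\le \lambda(q,r,s,m)$. Using $\Lambda-\lambda\in\{0,r+s\}$ from Remark \ref{rem:lambda}(iii), a short case check (on whether $q\in L(r,s,m)$) yields $a-b\ge 2(r+s)$ with $a:=f(V(P_i))$ and $b:=f(V(P_j))$. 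Applying Lemma \ref{la:H_nm} to the $H_{2,m}$ induced by $V(P_i)\cup V(P_j)$ with target weight $a-(r+s)\in L(r,s,m)\cap[b,a]$ produces an $m$-path $P'$ of weight $a-(r+s)$; the complementary path $Q'$, picking the remaining vertex from each layer, is automatically a directed $m$-path of weight $b+(r+s)$.

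Replacing $(P_i,P_j)$ by $(P',Q')$ gives a new valid decomposition. The monovariant
\[
\Phi(\mathcal{P}):=\sum_{k=1}^n f(V(P_k))^2
\]
strictly decreases under this swap: a direct calculation gives
\[
\Phi(\mathcal{P}')-\Phi(\mathcal{P})=-2(r+s)\bigl(a-b-(r+s)\bigr)<0,
\]
since $a-b\ge 2(r+s)$. As $\Phi$ is a non-negative integer, the process terminates, and termination means every path lies in $[\lambda(q,r,s,m),\Lambda(q,r,s,m)]$. The main delicate point I expect is the case analysis establishing the gap $a-b\ge 2(r+s)$: when $q\in L(r,s,m)$ one has $[\lambda,\Lambda]=\{q\}$, so ``bad'' means $|f(V(P_i))-q|\ge r+s$ on both sides, while when $q\notin L(r,s,m)$ one has $\Lambda-\lambda=r+s$ and the gap must be squeezed out together with the strict inequality $f(V(P_i))>\Lambda$.
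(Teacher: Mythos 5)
Your proof is correct, but it takes a genuinely different route from the paper's. The paper proves the theorem by induction on $n$: starting from an arbitrary decomposition of $H_{n+1,m}$, it uses an averaging argument plus Lemma \ref{la:H_nm} (applied globally) to locate a single directed $m$-path $P^*$ of weight exactly $\lambda(q,m)$ or exactly $\Lambda(q,m)$ (choosing whichever of the two lies on the appropriate side of $q$ according to whether $q-\lambda(q,m)\le\Lambda(q,m)-q$), removes it, and verifies that the new average $q'$ satisfies $\lambda(q',m)=\lambda(q,m)$ and $\Lambda(q',m)=\Lambda(q,m)$ so that the induction hypothesis applies to the remaining $H_{n,m}$. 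You instead fix $n$ and run a local-search argument: you apply Lemma \ref{la:H_nm} only inside the induced $H_{2,m}$ on two paths of the current decomposition, exploit the fact (not needed in the paper) that the complement of an $m$-path in $H_{2,m}$ is again a directed $m$-path, and drive termination with the potential $\Phi=\sum_k f(V(P_k))^2$. Your gap estimate $a-b\ge 2(r+s)$ checks out in both cases ($q\in L$ and $q\notin L$), including the symmetric case where the bad path is too light rather than too heavy, and it gives $\Phi'-\Phi\le -2(r+s)^2<0$, so the process halts at a decomposition with all weights in $[\lambda,\Lambda]$. What each approach buys: yours avoids the somewhat delicate verification that $\lambda$ and $\Lambda$ are preserved when a path is peeled off (the paper's Cases 1 and 2), at the cost of the complementary-path observation and the monovariant bookkeeping; the paper's induction avoids any termination argument but must choose carefully which extremal path to remove. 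Both rest on the same connectivity lemma, so neither is more general than the other.
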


\begin{proof}
We will prove the statement by induction on $n$. If $n = 1$, then $H_{n,m} = H_{1,m}$ is itself a directed path of length $m$ with
\[
\lambda(f(V(H_{1,m}),m) = f(V(H_{1,m})) = \Lambda(f(V(H_{1,m}),m),
\]
and we are done. 

Assume now that the theorem is true for $n$. We will prove the statement  for $n+1$. Let $f: V \rightarrow \{-r,s\}$ be a function on $V = V(H_{n+1,m})$, and let $q = \frac{1}{n+1} f(V)$. We will write, for short, $\lambda(q, m) = \lambda(q,r,s,m)$ and $\Lambda(q, m) = \Lambda(q,r,s,m)$. We will show in the following that there is always a directed $m$-path $P$ such that $\lambda(q,m) \le f(V(P)) \le \Lambda(q,m)$ and such that $\lambda(q,m) = \lambda(q',m)$ and $\Lambda(q,m) = \Lambda(q',m)$ for $q' = \frac{1}{n} (f(V) - f(P))$. This will allow us to use the induction hypothesis.  

Consider any decomposition of $V$ into $n+1$ vertex disjoint $m$-paths $P_1, P_2, \ldots, P_{n+1}$ and observe that $f(V) \in L(r,s,m(n+1))$. If $f(V(P_i)) \in \{ \lambda(q,m), \Lambda(p,q) \}$ for all $1 \le i \le n+1$, then we have already the desired decomposition. So suppose this is not the case. We will show that there are paths $P^*$ and $Q^*$ with $f(V(P^*)) = \lambda(q, m)$ and $f(V(Q^*)) = \Lambda(q, m)$. \\
If $f(V(P_i)) \le \lambda(q, m)$ for all $1 \le i \le n+1$, then, since not all $f(V(P_i))$ can be equal to $\lambda(q, m)$, we obtain, applying Remark \ref{rem:lambda} (ii) and (iv),
\[
f(V) = \sum_{i=1}^{n+1} f(V(P_i)) < (n+1) \lambda(q,m) \le \lambda(q(n+1),m(n+1)) = \lambda(f(V) , m (n+1)) = f(V),
\]
which is a contradiction. In the same way, if $f(V(P_i)) \ge \Lambda(q, m)$ for all $1 \le i \le n+1$, we obtain the contradiction
\[
f(V) = \sum_{i=1}^{n+1} f(V(P_i)) > (n+1) \Lambda(q,m) \ge \Lambda(q(n+1),m(n+1)) = \Lambda(f(V) , m (n+1)) = f(V).
\]
Hence, there have to exist paths $P$ and $Q$ such that $f(V(P)) > \lambda(q, m)$ and $f(V(Q)) < \Lambda(q, m)$. Since this means that $f(V(P)) \ge \Lambda(q, m)$ and $f(V(Q)) \le \lambda(q, m)$, we obtain with Lemma \ref{la:H_nm} that, for any $p \in L(r,s,m)$ with $\lambda(q,m) \le p \le \Lambda(q,m)$, there exists a directed $m$-path with $f$-value $p$. In particular, it follows that there are directed $m$-paths $P^*$ and $Q^*$ with $f(V(P^*)) =  \lambda(q, m)$ and $f(V(Q)) = \Lambda(q, m)$. Now we distinguish two cases.\\

\noindent
{\it Case 1. Suppose that $q - \lambda(q,m) \le \Lambda(q,m) - q$.}\\
We will show that, in this case, $P^*$ is the appropriate path to be removed in order to apply the induction hypothesis. So let $q' = \frac{1}{n} (f(V) - f(V(P^*))$. Then
\[
q' =  \frac{f(V) - f(V(P^*)}{n} = \frac{f(V) - \lambda(q, m)}{n} = \frac{q(n+1)}{n} - \frac{\lambda(q,m)}{n} = q + \frac{q-\lambda(q,m)}{n}.
\] 
Thus, using the assumption $q - \lambda(q,m) \le \Lambda(q,m) - q$ and Remark \ref{rem:lambda} (i),
\begin{align*}
q' & = q + \frac{q-\lambda(q,m)}{n} \\
&\le q + \frac{\Lambda(q,m)-q}{n}\\
& = \frac{q(n-1)+ \Lambda(q,m)}{n} \\
&\le \frac{\Lambda(q,m) (n-1) + \Lambda(q,m)}{n} \\
& = \Lambda(q,m).
\end{align*}
On the other side, we have $q' = q + \frac{q-\lambda(q,m)}{n} \ge q \ge \lambda(q,m)$. Hence, we have
\[
\lambda(q',m) = \lambda(q,m) \; \mbox{ and } \; \Lambda(q',m) = \Lambda(q,m).
\]
Moreover, since $H_{n,m} \cong H_{n+1,m} - V(P^*)$, we can apply here the induction hypothesis. Hence, as $q' = \frac{f(H_{n+1,m} - V(P^*))}{n} = \frac{f(V) - V(P^*)}{n}$, there is a decomposition of $H_{n+1,m} - V(P^*)$ into $n$ disjoint $m$-paths $P_1, P_2, \ldots, P_n$ such that 
\[
\lambda(q,m) = \lambda(q',m) \le f(V(P_i)) \le \Lambda(q',m) = \Lambda(q,m),
\]
for $1 \le i \le n$. Hence, together with the path $P^*$, we obtain the desired decomposition of $H_{n+1,m}$.\\
\noindent
{\it Case 2. Suppose that $q - \lambda(q,m) \ge \Lambda(q,m) - q$.}\\
In this case, it will follow that $Q^*$ is the appropriate path to be removed in order to apply the induction hypothesis. So let $q' = \frac{1}{n} (f(V) - f(V(Q^*))$. Then
\[
q' =  \frac{f(V) - f(V(Q^*))}{n} = \frac{f(V) - \Lambda(q, m)}{n} = \frac{q(n+1)}{n} - \frac{\Lambda(q,m)}{n} = q + \frac{q-\Lambda(q,m)}{n}.
\] 
Thus, using the assumption $q - \lambda(q,m) \ge \Lambda(q,m) - q$ and Remark \ref{rem:lambda} (i),
\begin{align*}
q' & = q + \frac{q-\Lambda(q,m)}{n} \\
&\ge q + \frac{\lambda(q,m)-q}{n}\\
& = \frac{q(n-1)+ \lambda(q,m)}{n} \\
&\ge \frac{\lambda(q,m) (n-1) + \lambda(q,m)}{n} \\
& = \lambda(q,m).
\end{align*}
On the other side, we have $q' = q + \frac{q-\Lambda(q,m)}{n} \le q \le \Lambda(q,m)$. Hence, we have again
\[
\lambda(q',m) = \lambda(q,m) \; \mbox{ and } \; \Lambda(q',m) = \Lambda(q,m).
\]
Moreover, since $H_{n,m} \cong H_{n+1,m} - V(Q^*)$, we can apply here the induction hypothesis on this graph. Hence, with $q' = \frac{f(H_{n+1,m} - V(Q^*))}{n} = \frac{f(V) - V(Q^*)}{n}$, there is a decomposition of $H_{n+1,m} - V(Q^*)$ into $n$ disjoint $m$-paths $Q_1, Q_2, \ldots, Q_n$ such that 
\[
\lambda(q,m) = \lambda(q',m) \le f(V(Q_i)) \le \Lambda(q',m) = \Lambda(q,m),
\]
for $1 \le i \le n$. Hence, together with the path $Q^*$, we obtain the desired decomposition of $H_{n+1,m}$.
\end{proof}

By Lemma \ref{rem:lambda} (vi), given positive integers $r, s, m$, and $n$, $0 \in L(r,s,m)$ if and only if $\frac{r+s}{\gcd(r,s)}$ divides $m$. In such a case, if we are provided with a function $f: V(H_{n,m}) \rightarrow \{-r,s\}$ such that $f(V(H_{n,m})) = 0$, it follows by previous theorem that the graph $H_{n,m}$ can be decomposed into $n$ $m$-paths of sum zero. Observe that such a ZS $m$-path has $\frac{qr}{\gcd(r,s)}$ vertices with $f$-value $s$ and $\frac{qs}{\gcd(r,s)}$ vertices with $f$-value $-r$, where $q = \frac{\gcd(r,s)}{r+s}m$. We have thus the following corollary.

\begin{corollary}\label{cor:decomp_zs}
Let $r, s, m , n$ be positive integers such that $\frac{r+s}{\gcd(r,s)}$ divides $m$. If $f: V(H_{n,m}) \rightarrow \{-r,s\}$ is a function such that $f(V(H_{n,m})) = 0$, then $H_{n,m}$ can be decomposed into $n$ vertex-disjoint directed ZS $m$-paths $P_1, P_2, \ldots, P_n$.
\end{corollary}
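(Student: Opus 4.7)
The plan is to deduce this directly from Theorem \ref{thm:decomposition} by specializing $q$ to zero and invoking the relevant items of Remark \ref{rem:lambda}. First, I would observe that the hypothesis $f(V(H_{n,m})) = 0$ gives $q = \frac{1}{n} f(V(H_{n,m})) = 0$, so Theorem \ref{thm:decomposition} yields a decomposition of $H_{n,m}$ into $n$ vertex-disjoint directed $m$-paths $P_1, P_2, \ldots, P_n$ satisfying
\[
\lambda(0,r,s,m) \le f(V(P_i)) \le \Lambda(0,r,s,m), \qquad 1 \le i \le n.
\]

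The core point is that the divisibility hypothesis $\frac{r+s}{\gcd(r,s)} \mid m$ is precisely what is needed in Remark \ref{rem:lambda} (vi) to conclude that $0 \in L(r,s,m)$. Once this membership is secured, Remark \ref{rem:lambda} (ii) forces $\lambda(0,r,s,m) = \Lambda(0,r,s,m) = 0$, so the bounds above collapse to $f(V(P_i)) = 0$ for every $i$. Each $P_i$ is therefore a ZS directed $m$-path, and the decomposition is the desired one.

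There is no real obstacle here: the whole combinatorial work has already been carried out in Theorem \ref{thm:decomposition} and in the algebraic analysis of $L(r,s,m)$ packaged in Remark \ref{rem:lambda}. The proof is essentially a two-line invocation, and the only thing worth highlighting is that the divisibility condition in the statement is exactly the necessary and sufficient condition for $0$ to be a realizable path sum in $L(r,s,m)$, so this hypothesis cannot be dropped or relaxed.
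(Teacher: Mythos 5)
Your proposal is correct and follows exactly the paper's own argument: specialize Theorem \ref{thm:decomposition} to $q=0$, use Remark \ref{rem:lambda}~(vi) to get $0 \in L(r,s,m)$ from the divisibility hypothesis, and then Remark \ref{rem:lambda}~(ii) to collapse $\lambda(0,r,s,m)$ and $\Lambda(0,r,s,m)$ to $0$. Nothing is missing.
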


\mbox{}\\[-4ex]

The case where  $r=s=1$ is also of our interest.

\begin{corollary}\label{cor:deconp_1s}
Let $m , n$ be positive integers, and let $f: V(H_{n,m}) \rightarrow \{-1,1\}$ be a function on $V(H_{n,m})$ and let $q = \frac{f(V(H_{n,m}))}{n}$ and $k = \lceil q \rceil$. Then $H_{n,m}$ can be decomposed into $n$ vertex-disjoint directed $m$-paths $P_1, P_2, \ldots, P_n$ with  
\[
f(V(P_j)) \in \left\{
\begin{array}{ll}
\{k - 2, k\}, & \mbox{if } m \equiv k \;({\rm mod} \; 2)\\
\{k-1, k+1\} & \mbox{if } m \not\equiv k \;({\rm mod} \; 2),
\end{array}
\right.
\]
for all $1 \le j \le n$.
\end{corollary}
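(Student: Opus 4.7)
The plan is to derive Corollary \ref{cor:deconp_1s} as a direct specialization of Theorem \ref{thm:decomposition} with $r = s = 1$, reducing everything to an explicit computation of $\lambda(q,1,1,m)$ and $\Lambda(q,1,1,m)$.

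First, I would describe $L(1,1,m)$ explicitly. By Remark \ref{rem:lambda}(v), $L(1,1,m) = \{2y - m \;:\; y \in [0,m] \cap \mathbb{Z}\}$, which is precisely the set of integers in $[-m, m]$ having the same parity as $m$. Then I would apply Theorem \ref{thm:decomposition}, which guarantees a decomposition of $H_{n,m}$ into $n$ vertex-disjoint directed $m$-paths $P_1, \ldots, P_n$ with $\lambda(q,1,1,m) \le f(V(P_j)) \le \Lambda(q,1,1,m)$ for each $j$. Thus it suffices to show that the interval $[\lambda(q,1,1,m), \Lambda(q,1,1,m)]$ intersects $L(1,1,m)$ in exactly the two values stated in each case, and this is a matter of splitting by parity.

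Since $k = \lceil q \rceil$, we have $k - 1 < q \le k$.

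\emph{Case 1: $m \equiv k \pmod 2$.} Then both $k$ and $k-2$ belong to $L(1,1,m)$. Since $k \ge q$, we get $\Lambda(q,1,1,m) \le k$; on the other hand $k - 2 < k - 1 < q$, so $k-2 \le \lambda(q,1,1,m)$. Because no element of $L(1,1,m)$ lies strictly between $k-2$ and $k$, we conclude $\lambda(q,1,1,m) = k-2$ and $\Lambda(q,1,1,m) = k$ (with equality $\lambda = \Lambda = k$ only in the degenerate case $q = k$, which is still consistent with $f(V(P_j)) \in \{k-2, k\}$). Hence each $f(V(P_j)) \in \{k-2, k\}$, as required.

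\emph{Case 2: $m \not\equiv k \pmod 2$.} Then $k \notin L(1,1,m)$, while $k-1$ and $k+1$ do belong to $L(1,1,m)$. From $q \le k < k+1$ and $k+1 \in L(1,1,m)$, we get $\Lambda(q,1,1,m) \le k+1$; from $k - 1 < q$ we cannot have $\Lambda(q,1,1,m) = k-1$, so $\Lambda(q,1,1,m) = k+1$. Symmetrically, $k - 1 < q$ together with $k - 1 \in L(1,1,m)$ gives $\lambda(q,1,1,m) \ge k-1$, and $k + 1 > q$ prevents $\lambda(q,1,1,m) = k+1$, so $\lambda(q,1,1,m) = k-1$. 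The decomposition from Theorem \ref{thm:decomposition} therefore yields $f(V(P_j)) \in \{k-1, k+1\}$.

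The main step is really just this parity bookkeeping — there is no additional obstacle, since all the structural work (connectedness of the auxiliary graph, induction on $n$, and selection of the extremal paths to delete) has already been carried out in Lemma \ref{la:H_nm} and Theorem \ref{thm:decomposition}.
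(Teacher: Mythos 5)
Your proposal is correct and follows essentially the same route as the paper: both specialize Theorem \ref{thm:decomposition} to $r=s=1$ and then identify $\lambda(q,1,1,m)$ and $\Lambda(q,1,1,m)$ by parity (the paper phrases this by observing $f(V(P_j)) = m - 2x_j$ forces $m \equiv \Lambda \pmod 2$ and then splits on whether $k = \Lambda$ or $k = \Lambda - 1$, which is the same bookkeeping you do). The only cosmetic difference is that the paper disposes of the degenerate case $q \in L(1,1,m)$ up front via the sum constraint, whereas you absorb it into Case 1; both are fine.
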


\begin{proof}
Let $\lambda(q,m) = \lambda(q,1,1,m)$ and $\Lambda(q,m) = \Lambda(q,1,1,m)$. By Theorem \ref{thm:decomposition}, there is a decomposition of $H_{n,m}$ into $n$ directed $m$-paths $P_1, P_2, \ldots, P_n$ such that $f(V(P_j)) \in \{\lambda(q,m), \Lambda(q,m)\}$. If $q = \lambda(q,m)$ or $q = \Lambda(q,m)$, then $q \in \mathbb{Z}$ and thus $k = q$. Since $\sum_{1 \le j \le n} f(V(P_j)) = f(V_{n,m}) = qn$, it follows that in either case $f(V(P_j)) = q = k$, for $1 \le j \le n$, and we are done. So we may suppose that $\lambda(q,m) < q < \Lambda(q,m)$. This implies that $k = \lceil q \rceil \in \{\Lambda(q,m) -1, \Lambda(q,m)\}$. Further, since, for each $1 \le j \le n$, there is an $x_j \in [0,m] \cap \mathbb{Z}$ such that $f(P_j) = m - 2x_j \in \{\lambda(q,m), \Lambda(q,m)\} = \{\Lambda(q,m)-2, \Lambda(q,m)\}$ (see Remark \ref{rem:lambda}(v)), we have that $m \equiv \Lambda(q,m)$. Hence, we have two cases: either $m \equiv k = \lceil q \rceil = \Lambda(q,m)$ or $m \not\equiv k = \lceil q \rceil = \Lambda(q,m) -1$. In the first case, we obtain $f(P_j) \in \{\Lambda(q,m)-2, \Lambda(q,m)\} = \{ k-2, k\}$ and in the second $f(P_j) \in \{\Lambda(q,m)-2, \Lambda(q,m)\} = \{ k-1, k+1\}$, as claimed.
\end{proof}

From Theorem \ref{thm:decomposition}, we can deduce the following result. Here we consider a partition $I_1\cup I_2\cup...\cup I_m$ of the interval  $[nm]$, where $I_i=\{(i-1)n+1, (i-1)n+2, \ldots, in\}$, for $1 \le i \le m$. 

\begin{theorem}\label{thm:decomposition_nm}
Let $r, s, m , n$ be positive integers, let $f: [nm] \rightarrow \{-r,s\}$ be a function on $[nm]$, and let $q = \frac{1}{n} f([nm])$. Then $[nm]$ can be decomposed into $n$ disjoint $(2n-1)$-bounded gap sequences $S_1, S_2, \ldots, S_n$ each of cardinality $m$, such that 
\[
\lambda(q,r,s,m) \le f(S_j) \le \Lambda(q,r,s,m),
\]
where $|S_j \cap I_i| =1$ for all $1 \le j \le n$ and all $1 \le i \le m$.
\end{theorem}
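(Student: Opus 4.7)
The plan is to translate the problem on $[nm]$ into a problem on the graph $H_{n,m}$ and invoke Theorem~\ref{thm:decomposition}. First, identify the vertex classes $V_1, V_2, \ldots, V_m$ of $H_{n,m}$ with the consecutive intervals $I_1, I_2, \ldots, I_m$ of $[nm]$ (in order), so that each $V_i$ is in bijection with $I_i = \{(i-1)n+1, \ldots, in\}$. Under this identification, the given function $f:[nm] \to \{-r,s\}$ becomes a function on $V(H_{n,m})$, and clearly $\frac{1}{n} f(V(H_{n,m})) = q$.

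Next, apply Theorem~\ref{thm:decomposition} to $H_{n,m}$ with this function $f$. The theorem yields a decomposition of $V(H_{n,m})$ into $n$ vertex-disjoint directed $m$-paths $P_1, P_2, \ldots, P_n$ satisfying
\[
\lambda(q,r,s,m) \le f(V(P_j)) \le \Lambda(q,r,s,m),
\]
for every $1 \le j \le n$. Let $S_j \subseteq [nm]$ be the subset corresponding to $V(P_j)$ under the bijection, and note that, since each directed $m$-path in $H_{n,m}$ visits exactly one vertex of each $V_i$, we have $|S_j \cap I_i| = 1$ for every $i \in [m]$ and $j \in [n]$. Also, $|S_j| = m$ and the sum bound on $f(S_j)$ is inherited from $f(V(P_j))$.

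It only remains to verify the $(2n-1)$-bounded gap property. Writing $S_j = \{a_{j,1} < a_{j,2} < \cdots < a_{j,m}\}$ and observing that the unique element of $S_j$ contained in $I_i$ is precisely $a_{j,i}$ (because the intervals $I_1, \ldots, I_m$ are ordered in $[nm]$), we have $a_{j,i} \in I_i$ and $a_{j,i+1} \in I_{i+1}$, hence
\[
a_{j,i+1} - a_{j,i} \le in - ((i-1)n+1) + n = 2n-1,
\]
for all $1 \le i \le m-1$. This shows $S_j$ is a $(2n-1)$-bounded gap sequence, completing the proof.

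No genuine obstacle is expected here; the content of the result is already packaged in Theorem~\ref{thm:decomposition}, and the only new observation is the elementary calculation showing that picking one element from each block of $n$ consecutive integers automatically yields a $(2n-1)$-bounded gap sequence.
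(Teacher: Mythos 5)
Your proposal is correct and follows exactly the paper's argument: identify the intervals $I_i$ with the vertex classes $V_i$ of $H_{n,m}$, apply Theorem~\ref{thm:decomposition}, and read off the $(2n-1)$-bounded gap property from the fact that each path picks one element per interval. You even make explicit the elementary gap calculation that the paper leaves implicit, so nothing is missing.
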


\begin{proof}
Consider $I_i$ as the vertex partition set $V_i$ of the graph $H_{n,m}$ and apply Theorem~\ref{thm:decomposition} in order to obtain the decomposition of $H_{n,m}$ into directed $m$-paths $P_1, P_2, \ldots, P_n$ such that $\lambda(q,r,s,m) \le f(V(P_j)) \le \Lambda(q,r,s,m)$, for all $1 \le j \le n$. Then, setting $S_i = V(P_i)$, we have $\lambda(q,r,s,m) \le f(S_j) \le \Lambda(q,r,s,m)$ and $|S_j \cap I_i| =1$ for all $1 \le j \le n$ and all $1 \le i \le m$ and so each $S_j$ is a $(2n-1)$-bounded gap sequence. Hence, we obtain the desired decomposition of $[nm]$.
\end{proof}

The following corollaries point out the particular cases of Theorem \ref{thm:decomposition_nm} where $\frac{r+s}{\gcd(r,s)}$ divides $m$ and where $r=s=1$.

\begin{corollary}
Let $r, s, m , n$ be positive integers such that $\frac{r+s}{\gcd(r,s)}$ divides $m$. If $f: [nm] \rightarrow \{-r,s\}$ is a function such that $f([nm]) =0$, then $[nm]$ can be decomposed into $n$ disjoint ZS $(2n-1)$-bounded gap sequences $S_1, S_2, \ldots, S_n$ each of cardinality $m$, such that $|S_j \cap I_i| =1$ for all $1 \le j \le n$ and all $1 \le i \le m$.
\end{corollary}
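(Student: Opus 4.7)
The plan is to obtain this as an immediate specialization of Theorem \ref{thm:decomposition_nm}, using Remark \ref{rem:lambda} to force the bounds to collapse to $0$. Concretely, since $f([nm])=0$, we have $q=\frac{1}{n}f([nm])=0$, so Theorem \ref{thm:decomposition_nm} yields a decomposition of $[nm]$ into $n$ disjoint $(2n-1)$-bounded gap sequences $S_1,\ldots,S_n$, each of cardinality $m$, with $|S_j\cap I_i|=1$ for all $i,j$, and
\[
\lambda(0,r,s,m)\le f(S_j)\le \Lambda(0,r,s,m).
\]

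The only remaining step is to observe that both bounds are equal to $0$. By Remark \ref{rem:lambda}(vi), the hypothesis that $\frac{r+s}{\gcd(r,s)}$ divides $m$ is exactly equivalent to $0\in L(r,s,m)$. By Remark \ref{rem:lambda}(ii), this in turn gives $\lambda(0,r,s,m)=\Lambda(0,r,s,m)=0$. Plugging this into the double inequality above forces $f(S_j)=0$ for every $j\in\{1,\ldots,n\}$, so each $S_j$ is a ZS $(2n-1)$-bounded gap sequence of cardinality $m$, as required.

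There is no genuine obstacle here; the corollary is a one-line consequence of Theorem \ref{thm:decomposition_nm} together with parts (ii) and (vi) of Remark \ref{rem:lambda}. The only thing worth double-checking when writing it out is that the hypothesis $\frac{r+s}{\gcd(r,s)}\mid m$ is used in the correct direction (to guarantee $0\in L(r,s,m)$, not the converse), so that one can legitimately invoke Remark \ref{rem:lambda}(ii) to conclude the coincidence $\lambda=\Lambda=0$.
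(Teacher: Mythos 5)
Your proposal is correct and matches the paper's (essentially implicit) argument: the paper likewise derives this as an immediate specialization of Theorem \ref{thm:decomposition_nm}, noting via Remark \ref{rem:lambda}(vi) that the divisibility hypothesis gives $0 \in L(r,s,m)$ and hence $\lambda(0,r,s,m)=\Lambda(0,r,s,m)=0$, which pins $f(S_j)=0$ for every $j$. Nothing is missing.
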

\mbox{}\\[-4ex]

\begin{corollary}
Let $m , n$ be positive integers, let $f: [nm] \rightarrow \{-1,1\}$ be a function on $[nm]$ and let $q = \frac{f([nm])}{n}$ and $k = \lceil q \rceil$. Then $[nm]$ can be decomposed into $n$ disjoint $(2n-1)$-bounded gap sequences $S_1, S_2, \ldots, S_n$ each of cardinality $m$,  such that 
\[
f(V(S_j)) \in \left\{
\begin{array}{ll}
\{k - 2, k\}, & \mbox{if } m \equiv k \;({\rm mod} \; 2)\\
\{k-1, k+1\} & \mbox{if } m \not\equiv k \;({\rm mod} \; 2),
\end{array}
\right.
\]
and $|S_j \cap I_i| =1$ for all $1 \le j \le n$ and all $1 \le i \le m$.
\end{corollary}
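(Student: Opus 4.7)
The plan is to invoke Theorem \ref{thm:decomposition_nm} directly with $r = s = 1$. This immediately yields a decomposition of $[nm]$ into $n$ disjoint $(2n-1)$-bounded gap sequences $S_1,\ldots,S_n$, each of cardinality $m$, with $|S_j \cap I_i| = 1$ for all $i,j$, and satisfying
\[
\lambda(q,1,1,m) \le f(S_j) \le \Lambda(q,1,1,m)
\]
for each $j$. All that remains is to identify $\lambda(q,1,1,m)$ and $\Lambda(q,1,1,m)$ in terms of $k = \lceil q \rceil$ and the parity relationship between $m$ and $k$.

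By Remark \ref{rem:lambda}(v),
\[
L(1,1,m) = \{m - 2x : x \in [0,m] \cap \mathbb{Z}\} = \{-m, -m+2, \ldots, m-2, m\},
\]
so $L(1,1,m)$ consists precisely of the integers in $[-m,m]$ having the same parity as $m$. From $k = \lceil q \rceil$ we have $k - 1 < q \le k$, so the next step is a short parity case split.

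First, suppose $m \equiv k \pmod{2}$. Then $k \in L(1,1,m)$ and the previous element of $L(1,1,m)$ is $k - 2 < k - 1 < q$, so $\Lambda(q,1,1,m) = k$, and $\lambda(q,1,1,m) \in \{k-2, k\}$ (with equality $k$ exactly when $q = k$). In either situation $f(S_j) \in \{k-2, k\}$, which matches the first case of the claim. Second, suppose $m \not\equiv k \pmod 2$. Then $k \notin L(1,1,m)$ but $k - 1, k+1 \in L(1,1,m)$; since $k - 1 < q < k + 1$ and no element of $L(1,1,m)$ lies strictly between $k - 1$ and $k + 1$, we obtain $\lambda(q,1,1,m) = k - 1$ and $\Lambda(q,1,1,m) = k + 1$, giving $f(S_j) \in \{k-1, k+1\}$ as required.

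There is no real obstacle here: the result is essentially a translation of Corollary \ref{cor:deconp_1s} from the directed graph $H_{n,m}$ to the interval $[nm]$ via the identification of $I_i$ with $V_i$, already performed in Theorem \ref{thm:decomposition_nm}. The only computational content is the identification of $\lambda$ and $\Lambda$ with the quantities in the statement, which reduces entirely to the parity observation about $L(1,1,m)$ above.
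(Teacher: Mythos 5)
Your proposal is correct and follows essentially the same route as the paper: the paper states this corollary as a direct consequence of Theorem \ref{thm:decomposition_nm} with $r=s=1$, with the identification of $\lambda(q,1,1,m)$ and $\Lambda(q,1,1,m)$ in terms of $k=\lceil q\rceil$ carried out exactly as in the proof of Corollary \ref{cor:deconp_1s} via the parity structure of $L(1,1,m)$. Your parity case analysis matches that computation, so nothing further is needed.
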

\mbox{}\\[-4ex]

\section{Simple applications and open problems}

In this concluding section, we shall give an example of two possible applications of our results to number-theoretical questions, and we will pose a few problems which we found particularly interesting.

\subsection{Applications}

Part of our motivation to study the problems mentioned in this paper is the obvious relation to well-known number-theoretical functions. We give below two such applications which are consequences of Theorem \ref{thm:o(n)}.\\

\noindent
\textbf{Application 1.} Our first application deals with the Liouville function in a direction inspired by the work of Hildebrand on sign patterns of this function in short intervals \cite{Hil}. Let $\lambda$ denote the Liouville function defined as $\lambda(n)=(-1)^{\Omega(n)}$ where $\Omega(n)$ denotes the number of prime factors of $n$ (counted with multiplicity). In \cite{Hil} Hildebrand shows that, for every pattern $(\epsilon_1, \epsilon_2, \epsilon_3)$, where $\epsilon_i\in\{-1,1\}$ for $i\in \{1,2,3\}$, there are infinitely many positive integers $n$ such that $\lambda(n+i)=\epsilon_i$. In other words, for any $\{-1,1\}$-pattern of length three there are infinitely many $3$-blocks in the  Liouville function sequence satisfying such pattern. Next we prove that, for any $k\geq 2$, there are infinitely many zero-sum $k$-blocks in the  Liouville function sequence.

It is well-known that the Prime Number Theorem is equivalent to the estimate $\sum_{i=1}^n\lambda(i)=o(n)\leq  \frac{n}{\omega (n)}$ for some function $\omega(n)$ such that $\omega(n)\to \infty$ with $n$ (see \cite{BoChCo}). Thus, the following corollary is a direct consequence of Theorem \ref{thm:o(n)}.

\begin{corollary}\label{col:liouville}
For every $k\geq 2$, $k\equiv 0$ {\rm (mod $2$)}, there are infinitely many positive integers $n$, such that $\sum_{i=0}^{k-1} \lambda(n+i)=0$.
\end{corollary}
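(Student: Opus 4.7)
The plan is to verify directly that the Liouville function $\lambda$ satisfies the hypotheses of Theorem \ref{thm:o(n)} and then invoke it. Since $\lambda : \mathbb{Z}^+ \to \{-1,1\}$ by definition, the only thing to check is the bound on $|\lambda([n])| = |\sum_{i=1}^n \lambda(i)|$.

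First I would recall the classical equivalence between the Prime Number Theorem and the estimate $\sum_{i=1}^n \lambda(i) = o(n)$ (cited in the paper via \cite{BoChCo}). From this asymptotic, one can extract an explicit auxiliary function $\omega:\mathbb{Z}^+ \to \mathbb{R}^+$ with $\omega(n)\to \infty$ such that $|\sum_{i=1}^n \lambda(i)| \le n/\omega(n)$ for all $n$; concretely, one can define $\omega(n) := n / \max\{1, |\sum_{i=1}^n \lambda(i)|\}$ and observe that $o(n)$ forces $\omega(n) \to \infty$.

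With this in hand, the function $f := \lambda$ meets the hypothesis of Theorem \ref{thm:o(n)} exactly. Applying that theorem, for every even $k \ge 2$ there are infinitely many zero-sum $k$-blocks in the sequence $(\lambda(i))_{i \ge 1}$, which is precisely the statement that there are infinitely many positive integers $n$ with $\sum_{i=0}^{k-1}\lambda(n+i) = 0$.

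There is no real obstacle here: the corollary is essentially a translation of Theorem \ref{thm:o(n)} to the Liouville setting, and the only nontrivial ingredient — the bound $\sum_{i \le n}\lambda(i) = o(n)$ — is the Prime Number Theorem, which is invoked as a black box. The entire proof therefore reduces to the single sentence explaining that PNT supplies the required $\omega(n)$.
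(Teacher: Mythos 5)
Your proposal is correct and follows exactly the paper's own argument: invoke the PNT-equivalent estimate $\sum_{i\le n}\lambda(i)=o(n)$ to supply the function $\omega(n)\to\infty$ with $|\lambda([n])|\le n/\omega(n)$, then apply Theorem \ref{thm:o(n)}. Your explicit construction of $\omega(n)$ is a minor addition of detail but not a different route.
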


Moreover, since $\lambda$ is a multiplicative function, i.e. $\lambda (mn)= \lambda (m)\lambda (n)$ for any positive integers $n, m$, Corollary~\ref{col:liouville} can be extended to arithmetic progressions. This follows from the observation that $\sum_{i \le n} \lambda(id) = \lambda(n) \sum_{i \le n} \lambda(i) = o(n) = \frac{n}{\omega(n)}$ for some function $\omega$ fulfilling $\omega(n) \rightarrow \infty$ with $n$.

\begin{corollary}\label{col:liouville2}
For every $k\geq 2$, $k\equiv 0$ {\rm (mod $2$)}, and, for every $d\geq 1$,  there are infinitely many arithmetic progressions $\{a_1, a_2,...,a_k\}$ of length $k$ and common difference $d$ such that $\sum_{i=0}^{k-1} \lambda(a_i)=0$.
\end{corollary}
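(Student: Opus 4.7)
The plan is to reduce this directly to Theorem~\ref{thm:o(n)} by exploiting complete multiplicativity of the Liouville function. Fix $d \geq 1$. I would define an auxiliary function $f: \mathbb{Z}^+ \to \{-1,1\}$ by $f(i) = \lambda(id)$. Since $\lambda$ is completely multiplicative, $f(i) = \lambda(d)\lambda(i)$, so $f$ is just $\pm \lambda$ restricted to the positive integers. The advantage of this reformulation is that a zero-sum $k$-block of $f$ starting at position $n+1$ is exactly a relation of the form
\[
0 \;=\; \sum_{i=1}^{k} f(n+i) \;=\; \sum_{i=1}^{k} \lambda\bigl((n+i)d\bigr),
\]
and the underlying integers $(n+1)d, (n+2)d, \ldots, (n+k)d$ form an arithmetic progression of length $k$ and common difference $d$. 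Thus each ZS $k$-block of $f$ yields an arithmetic progression of the type required by the corollary.

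Next I would verify that $f$ satisfies the hypothesis of Theorem~\ref{thm:o(n)}. By the Prime Number Theorem one has $\sum_{i=1}^{n} \lambda(i) = o(n)$, so there exists a function $\omega(n) \to \infty$ with $\bigl|\sum_{i=1}^n \lambda(i)\bigr| \leq n/\omega(n)$. Using $f(i) = \lambda(d)\lambda(i)$, this gives
\[
|f([n])| \;=\; \Bigl|\lambda(d)\sum_{i=1}^n \lambda(i)\Bigr| \;=\; \Bigl|\sum_{i=1}^n \lambda(i)\Bigr| \;\le\; \frac{n}{\omega(n)},
\]
which is precisely the growth condition needed to invoke Theorem~\ref{thm:o(n)}.

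With these two ingredients in place, the conclusion is immediate: for every even $k \geq 2$, Theorem~\ref{thm:o(n)} produces infinitely many positive integers $n$ such that $\sum_{i=1}^{k} f(n+i) = 0$, and by the translation above this yields infinitely many $k$-term arithmetic progressions of common difference $d$ on which $\lambda$ sums to zero. There is no genuine obstacle here — the only subtlety worth pointing out in the write-up is that complete multiplicativity is essential (mere multiplicativity would fail when $\gcd(i,d) > 1$), and that $\lambda(d) = \pm 1$ is a harmless constant that does not affect the $o(n)$ bound.
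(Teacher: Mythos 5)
Your proposal is correct and follows essentially the same route as the paper: both use the complete multiplicativity of $\lambda$ to write $\lambda(id)=\lambda(d)\lambda(i)$, deduce $|\sum_{i\le n}\lambda(id)|\le n/\omega(n)$ from the Prime Number Theorem, and apply Theorem~\ref{thm:o(n)} to $f(i)=\lambda(id)$ so that each ZS $k$-block translates into a $k$-term progression of common difference $d$. Your remark that complete (not mere) multiplicativity is what is really used is a fair and accurate refinement of the paper's wording.
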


\noindent
\textbf{Application 2.} Our second application deals with the Legendre function and zero-sum blocks of consecutive primes. Given a prime number $p\geq 3$, the Legendre symbol (of quadratic versus non-quadratic residue modulo $p$) is a multiplicative function defined as:

\[\left( \frac{a}{p}\right) = \left\{
\begin{array}{rl}
1, &  \mbox{if $a$ is a quadratic residue (mod $p$) and $a\not\equiv 0$ (mod $p$)},\\
-1, &  \mbox{if $a$ is a non-quadratic residue (mod $p$)},\\
0, & \mbox{if $a\equiv 0$ (mod $p$)}.
\end{array}
\right.
\]

Restricted to $\mathbb{P} = \{p_1, p_2, \ldots,p_n, \ldots\}$, the set of prime numbers, we obtain a $\{-1,1\}$-valued function $f$ via the rule $f(p_j)=\left( \frac{p_j}{p}\right)$.

By the prime number theorem  for arithmetic progressions \cite{Hux}, the primes are evenly distributed among the residue classes modulo $p$. To be precise, the number is crudely  $\pi(a,b,n) = |\{ ax +b \in \mathbb{P}\;|\;  1 \le x \le n \}| =  \frac{(1+o(1))n}{\Phi(a)\ln(n)}$, where $\Phi(a)$ is the Euler function and ${\rm gcd}(a,b) = 1$. In particular, up to $p_n$ there are  $\frac{n}{p-1} + o(\frac{n}{p-1})$ primes in any of the sequences  of the form $px +j$, where $j\in\{1,...,p-1\}$ (covering all the residue classes modulo $p$, except for $0$ (mod $p$)). 
Hence,  $F(n ,p):= \left|\sum_{j=1}^{n} f(p_ j) \right|=\left|\sum_{j=1}^{n} \left( \frac{p_j}{p}\right) \right|=o(n)\leq  \frac{n}{\omega (n)}$ for some function $\omega(n)$ such that $\omega(n)\to \infty$ with $n$. So, we can apply Theorem \ref{thm:o(n)} on blocks of consecutive primes to obtain infinitely many zero-sum $k$-blocks in the sequence of primes. More precisely: 

\begin{corollary}\label{col:legendre}
For any prime  $p\geq 3$  and even integer $k\geq 2$, there are  infinitely many positive integers $n$ such that $\sum_{i=0}^{k-1} f(p_{n+i})=0$.
\end{corollary}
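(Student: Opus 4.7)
The plan is to apply Theorem \ref{thm:o(n)} (extended to arbitrary sequences via the Remark at the end of Section 2) to the $\{-1,1\}$-valued sequence $A = (f(p_1), f(p_2), f(p_3), \ldots)$, where $f(p_j) = \left(\frac{p_j}{p}\right)$. There is one technical nuisance: $f(p_{j_0}) = 0$ for the unique index $j_0$ with $p_{j_0} = p$. This is absorbed by redefining the sequence to take the value $+1$ at this single position, which shifts every partial sum by at most $1$ and so disturbs neither the $o(n)$ growth of the partial sums nor (for sufficiently large $n$) the existence of zero-sum $k$-blocks. It therefore suffices to prove that $F(n,p) = \left|\sum_{j=1}^n f(p_j)\right| = o(n)$.

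The heart of the argument is to decompose this sum by residue class modulo $p$. For $a \in \{1, 2, \ldots, p-1\}$ put $\pi_a(n) = \#\{1 \le j \le n : p_j \equiv a \pmod{p}\}$. Since every prime $p_j \neq p$ lies in exactly one of these reduced residue classes, we may write
\[
\sum_{j=1}^n f(p_j) \;=\; \sum_{a=1}^{p-1} \left(\frac{a}{p}\right) \pi_a(n) \;+\; O(1),
\]
where the $O(1)$ accounts for the at-most-one index $j_0$ with $p_{j_0} = p$. The Prime Number Theorem for arithmetic progressions, invoked in the paragraph preceding the corollary, gives $\pi_a(n) = \frac{n}{p-1} + o(n)$ uniformly in $a \in \{1,\ldots,p-1\}$. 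Combining this with the elementary identity $\sum_{a=1}^{p-1} \left(\frac{a}{p}\right) = 0$ (valid for every odd prime, since the nonzero residues split evenly into quadratic and non-quadratic residues), the main terms cancel and we obtain
\[
\sum_{j=1}^n f(p_j) \;=\; \frac{n}{p-1}\cdot 0 \;+\; o(n) \;=\; o(n).
\]
Hence $F(n,p) \le n/\omega(n)$ for some $\omega(n) \to \infty$, and Theorem \ref{thm:o(n)} applied to the modified sequence produces infinitely many indices $n$ with $\sum_{i=0}^{k-1} f(p_{n+i}) = 0$, which is exactly the assertion of the corollary.

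The only conceptual ingredient is the equidistribution of primes across the reduced residues modulo $p$; everything else is algebraic bookkeeping and the single-entry patch that deals with the one vanishing value of the Legendre symbol. Since the equidistribution statement is a classical consequence of the Prime Number Theorem for arithmetic progressions already cited in the paper, no substantive obstacle remains, and the proof is essentially a one-line reduction to Theorem \ref{thm:o(n)}.
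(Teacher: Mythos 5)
Your proof is correct and follows the same route as the paper: the paper likewise reduces the corollary to Theorem \ref{thm:o(n)} by invoking the prime number theorem for arithmetic progressions to get $\left|\sum_{j=1}^{n} \left( \frac{p_j}{p}\right)\right| = o(n)$, via the equidistribution of the first $n$ primes among the reduced residue classes modulo $p$ and the vanishing of $\sum_{a=1}^{p-1}\left(\frac{a}{p}\right)$. Your explicit handling of the single index with $p_{j_0}=p$ (where the Legendre symbol is $0$) is a detail the paper's sketch leaves implicit, but it does not change the argument.
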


\subsection{Open problems}

There are many possible natural generalizations of our main theorem about zero-sum $k$-blocks when the range  of $f:[n]\to\{-1 ,1\}$ is replaced by various choices  of other ranges, such as $[x,y]=\{x,x+1,...,y\}$ and alike.  However, in most cases the existence of precisely $k$-consecutive zero-sum terms is not guaranteed as there is no corresponding ``interpolation lemma" (Lemma \ref{lem:k-blocks-t-sum}). Yet, in the case where $f: [n]\to \{ -r ,s \}$ for arbitrary positive integers $r$ and $s$, similar results can be deduced with more efforts. In particular, it should  be possible to prove the following conjecture along the same lines.

\begin{conjecture} 
Let  $r$, $s$ and $k$ be positive integers such that $(r+s)$ divides $k$. Then, there exists $c(r,s)$ satisfying that, if $n\geq \frac{rs}{(r+s)^2}k^2+c(r,s)k$ then
any function $f:[n]\to\{-r,s\}$ with $f([n])=0$ contains a ZS $k$-block.
\end{conjecture}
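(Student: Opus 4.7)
The plan is to follow the blueprint of Theorem~\ref{thm:k-blocks-t-sum}, adapted to the asymmetric $\{-r,s\}$ setting. Two structural facts are crucial. First, for any $k$-block $B$, writing $x:=|\{i\in B:f(i)=-r\}|$, one has $f(B)=sk-(r+s)x$; since $(r+s)\mid k$, every such value is a multiple of $r+s$. Second, consecutive $k$-blocks $B_i=\{i,\ldots,i+k-1\}$ and $B_{i+1}$ differ in exactly two elements, so $f(B_{i+1})-f(B_i)=f(i+k)-f(i)\in\{-(r+s),0,r+s\}$. Together these give the $\{-r,s\}$-analogue of Lemma~\ref{lem:k-blocks-t-sum}: if there exist $k$-blocks $S,T\subseteq[n]$ with $f(S)<0<f(T)$, then some $k$-block $B$ satisfies $f(B)=0$, because a jump between two nonzero multiples of $r+s$ of opposite signs via steps of absolute size at most $r+s$ must visit $0$. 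I would establish this interpolation lemma first.

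With that in hand, I would argue by contradiction. If no ZS $k$-block exists, then either every $k$-block has $f(B)\ge r+s$ or every $k$-block has $f(B)\le -(r+s)$; the two cases are mirror images under swapping $r$ and $s$, so it suffices to treat the first. Write $n=mk+\rho$ with $0\le\rho<k$, partition $[mk]$ into consecutive disjoint $k$-blocks $B_1,\ldots,B_m$, and let $R=\{mk+1,\ldots,n\}$. From $\sum_{i=1}^m f(B_i)+f(R)=f([n])=0$ and $f(B_i)\ge r+s$ we get $f(R)\le -m(r+s)$. Letting $B^*=\{n-k+1,\ldots,n\}$ and $T=B^*\cap B_m$, the inequality $f(B^*)\ge r+s$ yields $f(T)\ge(m+1)(r+s)$. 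Since $f$ takes values in $\{-r,s\}$, we have
\[
|T|\ \ge\ \frac{f(T)}{s}\ \ge\ \frac{(m+1)(r+s)}{s},\qquad |R|\ \ge\ \frac{|f(R)|}{r}\ \ge\ \frac{m(r+s)}{r},
\]
and using $|T|+|R|=k$ gives $k\ge\frac{(r+s)(m(r+s)+r)}{rs}$, whence $m\le\frac{krs}{(r+s)^2}-\frac{r}{r+s}$ and therefore $n\le\frac{rs}{(r+s)^2}k^2+\frac{sk}{r+s}-1$. Choosing $c(r,s)=\max\{r,s\}/(r+s)$ (so that both sign cases are covered by the symmetric argument) contradicts the hypothesis on $n$.

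The leading-order asymptotic $\frac{rs}{(r+s)^2}k^2$ thus comes out cleanly, matching the $\tfrac{k^2}{4}$ recovered from Corollary~\ref{cor:zs} when $r=s=1$. The boundary case $\rho=0$ is even simpler: $[n]$ is already tiled by $B_1,\ldots,B_m$, and $\sum_i f(B_i)=0$ directly contradicts $f(B_i)\ge r+s$. The main obstacle I anticipate is not in the existence statement itself but in pinning down the sharp value of $c(r,s)$ and, following Theorem~\ref{thm:sharpness_k-blocks-t-sum}, characterizing the extremal sequences. The extremal family is no longer symmetric in $r$ and $s$: one must separately track the positions of the $-r$'s embedded in the $+s$-dominated plateaus and of the $+s$'s embedded in the $-r$-dominated gaps, in a manner strictly richer than the sets $S_i\subseteq R_i$ used in the $\{-1,1\}$ case, and with divisibility of $f(T)$ by $s$ and of $f(R)$ by $r$ handled independently. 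I expect a full characterization to require a careful parametrization in terms of the residues of $m$ and $\rho$ modulo $(r+s)/\gcd(r,s)$.
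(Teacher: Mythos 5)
The statement you were asked to prove is posed in the paper only as a conjecture --- no proof is supplied, the authors merely remarking that it ``should be possible to prove along the same lines'' as Theorem~\ref{thm:k-blocks-t-sum} --- and your argument is correct and does exactly that: the $\{-r,s\}$-analogue of Lemma~\ref{lem:k-blocks-t-sum} is sound (every $k$-block weight is a multiple of $r+s$ because $(r+s)\mid k$, and the sliding window changes the weight by at most $r+s$), and the counting via $|T|\ge f(T)/s$, $|R|\ge |f(R)|/r$ and $|T|+|R|=k$ correctly forces $n\le\frac{rs}{(r+s)^2}k^2+\frac{\max\{r,s\}}{r+s}k-1$ in both sign cases, so $c(r,s)=\frac{\max\{r,s\}}{r+s}$ suffices. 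Your closing remarks on the sharp constant and the extremal characterization go beyond what the conjecture asks and are not needed for its resolution.
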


\begin{problem} 
Determine the minimum value of $N(r,s,k)$ such that  for $n\geq N(r,s,k)$  the above statement holds. 
\end{problem}

Another natural candidate for generalization is to replace the structure of blocks or bounded gap subsequences with that of arithmetic progressions. This leads us to the next problem.

\begin{problem} 
Let  $r$, $s$ and $k$ be positive integers such that $(r+s)$ divides $k$. Determine the minimum value $M(r,s,k)$ such that, if  $n\geq  M(r,s,k)$, then, for every function $f:[n]\to\{-r,s\}$ with $f([n])=0$, there is a $k$-term  arithmetic progression $A \subseteq [n]$ with $f(A)= 0$.  
\end{problem}
Clearly, the exact bounds obtained here for the case of zero-sum $k$-blocks serve as an upper bound for $M(r,s,k)$. In particular, $M(r,s,k) \leq c(r,s)k^2$ for some function $c(r,s)$. However, already for the case $\{ -r ,s\}  = \{ -1 ,1\}$, there are values of $k$ where the constructions forbidding $k$-blocks does not forbid  $k$-term  arithmetic progressions. 

\begin{proposition}\label{prop:arithm-prog}
Let $k\equiv 2$ (mod $4$) and $n=\frac{1}{4}k^2-1$. If $\frac{k}{2}$ is not a prime, then any function $f:[n]\to \{-1,1\}$ with  $f([n])=0$ contains a ZS $k$-term  arithmetic progression.
\end{proposition}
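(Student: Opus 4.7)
My plan is to argue by contradiction, reducing to the extremal classification of Theorem~\ref{thm:sharpness_k-blocks-t-sum}. Suppose $f:[n]\to\{-1,1\}$ has $f([n])=0$ but no ZS $k$-term AP; since a $k$-block is a $k$-term AP of common difference~$1$, in particular $f$ has no ZS $k$-block. With $t=q=0$, the condition $k\equiv 2\pmod 4$ forces $s=0$, and $n=k^2/4-1$ is exactly one unit below the threshold of Corollary~\ref{cor:zs}. So the hypotheses of Theorem~\ref{thm:sharpness_k-blocks-t-sum} apply, giving $f\in\mathcal F_{k,0,0}$. Since $s=0$ collapses every $S_i$ to the empty set, $\mathcal F_{k,0,0}=\{f_0,-f_0\}$, where (writing $a:=k/2$) the function $f_0$ is purely periodic:
\[
f_0(x)=-1\iff x\bmod k\in\{1,\ldots,a-1\},\qquad f_0(x)=+1\text{ otherwise}.
\]
Replacing $f$ by $-f$ if necessary, I may take $f=f_0$.

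Next I would exploit the compositeness of $a$. Since $k\equiv 2\pmod 4$, $a$ is odd, and by hypothesis $a$ is composite, so $a=uv$ with $3\le u\le v$, and in particular $u\le\sqrt a$. I would then propose the $k$-term AP
\[
A=\{1,\,1+u,\,1+2u,\,\ldots,\,1+(k-1)u\}
\]
of common difference $u$. Because $u$ is an odd divisor of $a\mid k$, we have $\gcd(u,k)=u$, so the $k=2uv$ terms of $A$ realize each element of the coset $1+u\mathbb Z_k$ (of size $k/u=2v$) exactly $u$ times. Representing this coset in $[1,k-1]$ as $\{1,\,1+u,\ldots,\,1+(2v-1)u\}$, the first $v$ representatives lie in $[1,a-u+1]\subseteq\{1,\ldots,a-1\}$ (so $f_0=-1$ there), while the remaining $v$ lie in $[a+1,k-u+1]\subseteq\{a+1,\ldots,k-1\}$ (so $f_0=+1$ there). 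Thus
\[
f_0(A)=u\bigl(v\cdot(-1)+v\cdot(+1)\bigr)=0.
\]

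The last step is to confirm $A\subseteq[n]$, i.e.\ $1+(k-1)u\le a^2-1$, equivalently $u(2a-1)\le a^2-2$. Using $u\le\sqrt a$ this reduces to the elementary inequality $\sqrt a(2a-1)\le a^2-2$, valid for every $a\ge 4$; our $a$ is an odd composite so $a\ge 9$, and the bound holds comfortably. Hence $A$ is a ZS $k$-term AP contained in $[n]$, contradicting the assumption and completing the argument.

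I do not expect any step to be technically difficult; the real content lies in having Theorem~\ref{thm:sharpness_k-blocks-t-sum} collapse the problem to a single explicit periodic sequence and in spotting a common difference~$u$ that cleanly bisects the period modulo $k$. The compositeness hypothesis on $a$ enters at exactly this point: what one needs is an odd divisor of $k$ with $3\le u\le\sqrt a$, simultaneously small enough for the AP to fit in $[n]$ and divisible into $a$ so that the AP's residues split evenly between the ``negative'' and ``positive'' halves of the period. Such a $u$ exists precisely when $a$ admits a nontrivial factorization, which is why the argument degrades (and the question becomes genuinely harder, as hinted by Conjecture~\ref{conj:arithm-prog}) when $a$ is prime.
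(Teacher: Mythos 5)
Your argument is correct and follows essentially the same route as the paper: both reduce, via Theorem~\ref{thm:sharpness_k-blocks-t-sum}, to the single periodic extremal function (the $s=0$ case of $\mathcal{F}_{k,0,0}$) and then exhibit the zero-sum arithmetic progression $\{1,1+d,\ldots,1+(k-1)d\}$ for a nontrivial divisor $d$ of $k/2$, whose residues split evenly between the $-1$-segments and the $+1$-segments of the period. Your version is in fact slightly more careful than the paper's, since you pin down $u\le\sqrt{a}$ and verify explicitly that the progression fits inside $[n]$.
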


\begin{proof}
Consider $f:[n]\to \{-1,1\}$ with  $f([n])=0$. By Theorem \ref{thm:sharpness_k-blocks-t-sum}, we know that $f$ either contains a ZS $k$-block (therefore, a ZS-$k$-term  arithmetic progression) or is exactly the function defined below. Let  $m$ and $r$ be integers such that $n = km+r$, $m \ge 0$ and $0 \le r \le k-1$, then

\[f(x) = \left\{
\begin{array}{rl}
-1, &  \mbox{if } x \in R,\\
1, & \mbox{otherwise},
\end{array}
\right.
\]
where $R =R_1\cup R_2 \cup .... \cup R_{m+1}$ and  $R_i= \{(i-1)k+1,(i-1)k+2, \ldots, (i-1)k+r\}$, for $i \in [m]$. 
Thus, $f$ corresponds to a sequence $R_1\, P_1\, R_2\, P_2\,.... \,R_m\, P_m\, \, R_{m+1}$, where each $R_i$ is a $(\frac{k}{2}-1)$-block of $-1$'s, and each $P_i$ is a $\left(\frac{k}{2}+1\right)$-block of $1$'s. For any divisor $d$ of $\frac{k}{2}$ such that $d<\frac{k}{2}$, consider the set \[A=\{x \in [n] \;|\;   x\equiv 1  \mbox{(mod $d$)}  \mbox{ and }  x< (k+1)d\}.\]
Note that $A$ is a $k$-term  arithmetic progression with common difference $d$, and  $f(A)=0$ since $A$ takes $\frac{k}{2d}$ terms of each $B_i$ and $\frac{k}{2d}$ terms of each $P_i$ that it intersects.

\end{proof}

Yet it seems the following holds: 

\begin{conjecture}\label{conj:arithm-prog}
There are positive constants $c_1 = c_1(r ,s)$, $c_2=c_2(r,s)$, such that  \[c_1k^2 \leq M(r,s,k) \leq c_2k^2.\]
\end{conjecture}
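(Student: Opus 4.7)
The strategy is to extend Corollary~\ref{cor:zs} to $\{-r,s\}$-valued functions and then observe that any zero-sum $k$-block is a zero-sum $k$-term AP of common difference $1$. The interpolation Lemma~\ref{lem:k-blocks-t-sum} transfers at once to this setting: two consecutive $k$-blocks $B_i$, $B_{i+1}$ satisfy $f(B_{i+1}) - f(B_i) \in \{-(r+s), 0, r+s\}$, and the divisibility $(r+s) \mid k$ forces $f(B) \equiv 0 \pmod{r+s}$ for every $k$-block, so two $k$-blocks of opposite sign imply the existence of a zero-sum $k$-block. Assuming the contrary, I would partition $[n]$ into $m = \lfloor n/k \rfloor$ consecutive $k$-blocks $B_1, \ldots, B_m$ plus a remainder $R$ of size less than $k$; after possibly negating $f$ we have $f(B_i) \ge r+s$ for all $i$, so $0 = f([n]) \ge m(r+s) + f(R) \ge m(r+s) - rk$, forcing $n = O_{r,s}(k^2)$ and hence $M(r,s,k) \le c_2(r,s)\, k^2$.

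\textbf{Lower bound when $\frac{k}{r+s}$ is prime.} For the lower bound, I plan to analyze the $\{-r,s\}$ analogue of the extremal family $\mathcal{F}_{k,0,0}$: a periodic block pattern of period $k$, each period consisting of $\frac{sk}{r+s}-1$ entries equal to $-r$ followed by $\frac{rk}{r+s}+1$ entries equal to $s$, producing a function on $n = \Theta(k^2)$ integers with $f([n]) = 0$ and no zero-sum $k$-block. Setting $m := \frac{k}{r+s}$, when $m$ is prime every admissible common difference $d$ satisfies $d \le (n-1)/(k-1) < m$, so $\gcd(d, m) = 1$ and $\gcd(d, k) = \gcd(d, r+s) =: g$. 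The orbit of the $k$-AP modulo $k$ is then a coset of the subgroup $g\mathbb{Z}/k\mathbb{Z}$, visited uniformly $g$ times. A direct evaluation of the sum of $f$ over such a coset (made clean by the coprimality $\gcd(g, m) = 1$) yields a nonzero multiple of $r+s$, showing that no zero-sum $k$-AP exists. This gives $M(r,s,k) \ge c_1(r,s)\, k^2$ along the infinite subsequence of $k$'s for which $m$ is prime.

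\textbf{General $k$ and the main obstacle.} For arbitrary $k$ with $(r+s)\mid k$, Bertrand's postulate supplies a prime $p$ with $\frac{k}{2(r+s)} < p \le \frac{k}{r+s}$, and the extremal construction for $k^\ast := (r+s)p$ lives on $n^\ast = \Theta((k^\ast)^2) = \Theta(k^2)$ integers. The principal obstacle is that this construction is not automatically free of zero-sum $k$-APs, since $k > k^\ast$ and there is no direct reduction of $k$-APs to $k^\ast$-APs of the same common difference; indeed, simple concrete instances (cf.\ the remarks following Proposition~\ref{prop:arithm-prog}) show that zero-sum $k$-APs can be engineered against a construction designed only for $k^\ast$-blocks. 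My proposed remedy is a perturbation of the block lengths by $O(1)$ entries, engineered to misalign the construction with every proper divisor of $m$ simultaneously while preserving $f([n]) = 0$ up to an $O(k)$ correction to $n$; controlling all $\Theta(k)$ admissible common differences under such a perturbation is the technical heart of the argument and is, in my view, the principal difficulty in establishing the conjecture.
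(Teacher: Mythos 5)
The statement you are addressing is Conjecture~\ref{conj:arithm-prog}, which the paper leaves open: no proof is given, only the remark that the zero-sum $k$-block bounds yield $M(r,s,k)=O(k^2)$ together with Proposition~\ref{prop:arithm-prog}, which shows that the natural periodic construction fails to certify the lower bound whenever $\frac{k}{2}$ is composite. Your upper-bound argument is sound and essentially coincides with the paper's own observation: the interpolation Lemma~\ref{lem:k-blocks-t-sum} does transfer to $\{-r,s\}$-sequences (consecutive $k$-blocks differ by swapping one element, so their weights differ by $0$ or $\pm(r+s)$, and $(r+s)\mid k$ forces every block weight into $(r+s)\mathbb{Z}$), and your counting then gives $n=O_{r,s}(k^2)$ whenever no zero-sum $k$-block, hence no zero-sum $k$-AP, exists.

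The genuine gap is in the lower bound, and you have correctly located it yourself. The prime case is plausible; for $r=s=1$ and $k\equiv 2\ ({\rm mod}\ 4)$ with $\frac{k}{2}$ prime your coset computation checks out, since every admissible $d$ has $\gcd(d,k)\in\{1,2\}$ and the relevant cosets are unbalanced because $\frac{k}{4}\notin\mathbb{Z}$. But two problems remain. First, for general $r,s$ with $\min(r,s)\ge 2$ the inequality $d\le\frac{n-1}{k-1}<m$ fails for the natural choice $n\approx\frac{rs}{(r+s)^2}k^2$, because $\frac{rs}{r+s}\ge 1$; you would need to shrink the construction or treat $d\ge m$ separately, and differences with $\gcd(d,m)>1$ are exactly the dangerous ones by Proposition~\ref{prop:arithm-prog}. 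Second, and decisively, the passage from prime $m$ to arbitrary $m$ via Bertrand's postulate plus an $O(1)$ perturbation is only announced, not executed: you give no mechanism for verifying that a perturbed construction simultaneously avoids zero-sum $k$-APs for all $\Theta(k)$ admissible common differences, while Proposition~\ref{prop:arithm-prog} shows the unperturbed construction genuinely contains such progressions when $\frac{k}{2}$ is composite. Since the lower bound for all $k$ is precisely what prevents the statement from being a corollary of Theorem~\ref{thm:sharpness_k-blocks-t-sum}, the proposal does not establish the conjecture; it is a reasonable plan of attack with the hard step left open, as you candidly acknowledge.
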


\section{Acknowledgements}

The second author was partially supported by PAPIIT IA103915 and CONACyT project 219775. The third author was partially supported by PAPIIT IN114016 and CONACyT project 219827. Finally, we would like to acknowledge the support from Center of Innovation in Mathematics, CINNMA A.C.


\end{document}